\documentclass[12pt]{article}
\usepackage{amsmath}
\usepackage{amsfonts}
\usepackage{amsthm}
\usepackage{amssymb}
\usepackage{color}
\usepackage{enumerate}
\usepackage[mathscr]{euscript}

\newtheorem{theorem}{Theorem}[section]
\newtheorem{lemma}[theorem]{Lemma}
\newtheorem{proposition}[theorem]{Proposition}
\newtheorem{observation}[theorem]{Observation}
\newtheorem{problem}[theorem]{Problem}
\newtheorem{dir}[theorem]{Direction}
\newtheorem{example}[theorem]{Example}
\newtheorem{corollary}[theorem]{Corollary}

\theoremstyle{definition}
\newtheorem{definition}[theorem]{Definition}
\theoremstyle{remark}
\newtheorem{remark}[theorem]{Remark}

\oddsidemargin 0.4truecm   % -0.7truecm
\evensidemargin 0pt \marginparwidth 40pt \marginparsep 10pt

% vertical spacing:
\topmargin -1.7truecm \headsep 40pt \textheight 21.5truecm
\textwidth 15truecm

%%%%%%%%%%%%%%%%%%%%    Mikhail Ostrovskii's stuff

\newcommand{\kr}{{\rm KR}\hskip0.02cm}
\newcommand{\tp}{{\rm TP}\hskip0.02cm}
\newcommand{\tc}{{\rm TC}\hskip0.02cm}

\def\mf{\mathcal{F}}

\def\mk{\mathcal{K}}
\def\ml{\mathcal{L}}
\def\mn{\mathcal{N}}
\def\mr{\mathcal{R}}
\def\ms{\mathcal{S}}
\def\mdp{\mathcal{DP}}

\def\f2{\mathbb{F}_2}

\def\lip{\hskip0.02cm{\rm Lip}\hskip0.01cm}
\def\supp{\hskip0.02cm{\rm supp}\hskip0.01cm}

\newcommand{\Bs}{\mathscr{B}}

\newcommand{\ep}{\varepsilon}

\newcommand{\sign}{{\rm sign}\hskip0.02cm}

%Florin's stuff

\newcommand{\1}{\mathbf{1}}

\newcommand\remove[1]{}

\begin{document}

\title{\LARGE Generalized transportation cost spaces}

\author{Sofiya Ostrovska and Mikhail~I.~Ostrovskii}

\date{\today}
\maketitle

%\begin{large}

%\tableofcontents

\begin{abstract} The paper is devoted to the geometry of transportation cost
spaces  and their generalizations introduced by Melleray, Petrov,
and Vershik (2008). Transportation cost spaces are also known as
Arens-Eells, Lipschitz-free, or Wasserstein $1$ spaces. In this
work, the existence of metric spaces with the following properties
is proved: (1) uniformly discrete infinite metric spaces
transportation cost spaces on which do not contain isometric
copies of $\ell_1$, this result answers a question raised by
C\'uth and Johanis (2017); (2) locally finite metric spaces which
admit isometric embeddings only into Banach spaces containing
isometric copies of $\ell_1$; (3) metric spaces for which the
double-point norm is not a norm. In addition, it is proved that
the double-point norm spaces corresponding to trees are close to
$\ell_\infty^d$ of the corresponding dimension, and that for all
finite metric spaces $M$, except a very special class, the infimum
of all seminorms for which the embedding of $M$ into the
corresponding seminormed space is isometric, is not a seminorm.
\end{abstract}

{\small \noindent{\bf Keywords.} Arens-Eells space, Banach space,
distortion of a bilipschitz embedding, earth mover distance,
Kantorovich-Rubinstein distance, Lipschitz-free space, locally
finite metric space, transportation cost, Wasserstein distance}

{\small \noindent{\bf 2010 Mathematics Subject Classification.}
Primary: 46B03; Secondary: 46B04, 46B20, 46B85, 91B32}

\section{Introduction}

\subsection{Definitions}\label{S:Def}

 Let $(M,d)$ be a metric space. Consider a real-valued finitely
supported function $f$ on $M$ with a zero sum, that is,

\begin{equation}\label{E:Sum0}\sum_{v\in M}f(v)=0.\end{equation}

A natural and important interpretation of such a function is the
following: $f(v)>0$ means that $f(v)$ units of a certain product
are produced or stored at point $v$; $f(v)<0$ means that $(-f(v))$
units of the same product are needed at $v$.  The number of units
can be any real number.  With this in mind, $f$ may be regarded as
a {\it transportation problem}. For this reason, we denote the
vector space of all real-valued functions finitely supported on
$M$ with a zero sum by $\tp(M)$, where $\tp$ stands for {\it
transportation problems}.
\medskip

For a metric space $M$ with the {\it base point}, which is a
distinguished point usually denoted by $O$, there is a {\it
canonical embedding}\,  of $M$ into $\tp(M)$ given by the formula:
\begin{equation}\label{E:EmbBasePt} v\mapsto \1_v-\1_O,\end{equation}
where $\1_u(x)$ for $u\in M$ is the {\it indicator function}
defined as:
\[\1_u(x)=\begin{cases} 1 &\hbox{ if }x=u,\\ 0 &\hbox{ if }x\ne u.
\end{cases} \]

The goal of this work  is to study different norms on the vector
space $\tp(M)$ for which this embedding is an isometric embedding.
\medskip

One of the most commonly used norms on $\tp(M)$ satisfying this
condition is that related to the {\it transportation cost} and
defined in the following way.\medskip

A {\it transportation plan} is a plan of the following type: we
intend to deliver

\begin{itemize}

\item $a_1$ units of the product from $x_1$ to $y_1$,

\item $a_2$ units of the product from $x_2$ to $y_2$,

\item \dots

\item $a_n$ units of the product from $x_n$ to $y_n$,

\end{itemize}
where $a_1,\dots,a_n$ are nonnegative real numbers, and
$x_1,\dots,x_n,y_1,\dots,y_n$ are elements of $M$, which do not
have to be distinct.\medskip

This transportation  plan is said to  {\it solve the
transportation problem} $f$ if
\begin{equation}\label{E:TranspPlan}f=a_1(\1_{x_1}-\1_{y_1})+a_2(\1_{x_2}-\1_{y_2})+\dots+
a_n(\1_{x_n}-\1_{y_n}).\end{equation}

 The {\it cost} of  transportation plan
\eqref{E:TranspPlan} is
 defined as $\sum_{i=1}^n a_id(x_i,y_i)$. We introduce the
{\it transportation cost norm} (or just {\it transportation cost})
$\|f\|_{\tc}$ of a transportation problem $f$ as the minimal cost
of transportation plans solving $f$. It is easy to see that the
minimum is attained - we consider finitely supported functions -
and that $\|\cdot\|_\tc$ is a norm (see \cite[Proposition
3.16]{Wea18}). We introduce the {\it transportation cost space}
$\tc(M)$ on $M$ as the completion of $\tp(M)$ with respect to the
norm $\|\cdot\|_\tc$. The introduced above notions are very
natural and were introduced independently and not-so-independently
by many different people, whence a variety of names. There are
also very important and actively studied notions, which are
somewhat different from the introduced above, but are closely
related to them. A survey  of these definitions, relations between
them, and some historical notes are provided in Section
\ref{S:HistTerm}.

Our main reasons for choosing the term {\it transportation cost
space} are: (1) This term will make it immediately clear to as
many people as possible what is the topic of this paper; (2) This
terminology helps us to develop a suitable language and to build
the right intuition for working with the norm $\|\cdot\|_\tc$; (3)
It reflects the history of the subject and the initial motivation
for introducing these notions.

\subsection{Preliminaries}\label{S:MoreNorms}

\begin{theorem}[\cite{KG49}]\label{T:WitnLip} A plan
\begin{equation}f=a_1(\1_{x_1}-\1_{y_1})+a_2(\1_{x_2}-\1_{y_2})+\dots+
a_n(\1_{x_n}-\1_{y_n})\end{equation} with $a_i>0$, $i=1,\dots,n$,
is optimal;  that is, it has the minimal cost if and only if there
exists a $1$-Lipschitz real-valued  function $l$ on $M$ such that
\[l(x_i)-l(y_i)=d(x_i,y_i)\]
for all pairs $x_i,y_i$.
\end{theorem}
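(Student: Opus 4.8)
The plan is to derive both implications from the elementary ``weak duality'' bound, reserving finite-dimensional linear programming for the harder direction. First I would record that for any $1$-Lipschitz function $l$ on $M$ and any plan $g=\sum_j b_j(\1_{u_j}-\1_{w_j})$ with $b_j\ge 0$ solving the same problem $f$,
\begin{equation*}
\sum_{v\in M} f(v)\,l(v)=\sum_j b_j\bigl(l(u_j)-l(w_j)\bigr)\le \sum_j b_j\, d(u_j,w_j),
\end{equation*}
where the first equality uses $f=\sum_j b_j(\1_{u_j}-\1_{w_j})$ and the inequality uses $l(u_j)-l(w_j)\le d(u_j,w_j)$ together with $b_j\ge 0$. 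Since the left-hand side depends only on $f$ and $l$, taking the infimum over all plans $g$ gives $\sum_{v} f(v)\,l(v)\le \|f\|_{\tc}$ for every $1$-Lipschitz $l$. The ``if'' direction is then immediate: assuming $l(x_i)-l(y_i)=d(x_i,y_i)$ for all $i$, the cost of the given plan is
\begin{equation*}
\sum_{i=1}^n a_i\, d(x_i,y_i)=\sum_{i=1}^n a_i\bigl(l(x_i)-l(y_i)\bigr)=\sum_{v\in M} f(v)\,l(v)\le \|f\|_{\tc},
\end{equation*}
while $\|f\|_{\tc}$ never exceeds the cost of any plan, in particular this one; hence equality holds and the plan is optimal.

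Necessity is the substantive part, and I would treat it by finite linear programming. Let $S\subseteq M$ be the finite set consisting of the support of $f$ together with all points $x_i,y_i$ occurring in the plan. Restricting to plans supported on $S$ can only shrink the feasible set, yet the given plan attains the global minimum $\|f\|_{\tc}$ while being supported on $S$; therefore it is optimal for the transportation problem on $S$. Writing this problem as the linear program of minimizing $\sum_{p,q} t_{pq}\, d(z_p,z_q)$ over $t_{pq}\ge 0$ subject to the flow-balance equations $\sum_q(t_{pq}-t_{qp})=f(z_p)$ for $z_p\in S$, its dual maximizes $\sum_p f(z_p)\,l(z_p)$ subject to $l(z_p)-l(z_q)\le d(z_p,z_q)$ for all $p,q$; these dual constraints say exactly that $l$ is $1$-Lipschitz on $S$. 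Both programs are feasible and bounded, so strong duality yields an optimal dual vector $l$, and complementary slackness forces every primal variable that is strictly positive in our optimal plan -- in particular each $a_i$ on the arc $(x_i,y_i)$ -- to saturate its dual constraint, giving $l(x_i)-l(y_i)=d(x_i,y_i)$.

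It remains to return from $S$ to all of $M$: the McShane--Whitney formula $l(x):=\inf_{s\in S}\bigl(l(s)+d(x,s)\bigr)$ extends $l$ to a $1$-Lipschitz function on $M$ without altering its values on $S$, so the equalities persist. The main obstacle is this necessity direction, and within it the only genuinely nontrivial input is strong duality together with complementary slackness; the reduction to a finite $S$, the dualization, and the Lipschitz extension are routine. A self-contained alternative to citing linear-programming duality would be to show directly that, were no such potential to exist on $S$, the complete directed graph on $S$ with arc lengths $d(\cdot,\cdot)$ augmented by reverse arcs $x_i\to y_i$ of length $-d(x_i,y_i)$ would contain a negative cycle, and that rerouting flow around this cycle would strictly decrease the cost, contradicting optimality.
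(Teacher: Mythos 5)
The paper does not prove this statement at all: it is quoted as a classical result of Kantorovich and Gavurin \cite{KG49}, so there is no in-text argument to compare yours against. Your proof is correct and self-contained. The weak-duality computation $\sum_v f(v)l(v)\le\|f\|_{\tc}$ for $1$-Lipschitz $l$ settles sufficiency exactly as it should (and, as a by-product, is the inequality underlying the paper's Corollary containing \eqref{E:TCLip}). For necessity, the reduction to the finite set $S$ is legitimate because optimality over all plans on $M$ implies optimality over the smaller class of plans supported on $S$; the dual of your flow LP is correctly identified (the constraint $l(z_p)-l(z_q)\le d(z_p,z_q)$ for all ordered pairs is equivalent, by symmetry of $d$, to $l$ being $1$-Lipschitz on $S$); strong duality and complementary slackness apply since both programs are feasible and the primal objective is bounded below by $0$; and the McShane--Whitney extension preserves the values on $S$ because $l$ is already $1$-Lipschitz there. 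One small point worth making explicit: when you aggregate the terms $a_i(\1_{x_i}-\1_{y_i})$ into arc variables $t_{pq}$, each arc actually used has $t_{pq}>0$ because all $a_i>0$, which is what complementary slackness needs; and the hypothetical case $t_{pq}>0$ and $t_{qp}>0$ for distinct points cannot occur for an optimal plan, consistent with the fact that it would force the contradictory equalities $l(z_p)-l(z_q)=d=-(l(z_p)-l(z_q))$. Your closing remark about negative cycles is the standard combinatorial substitute for LP duality and would also work.
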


Denote by $\mathcal{L}$ the set of all $1$-Lipschitz functions on
$M$. The following is an immediate corollary of Theorem
\ref{T:WitnLip}.

\begin{corollary}
\begin{equation}\label{E:TCLip}\|f\|_{\tc}=\sup_{l\in \mathcal{L}} \left|\sum_{v\in M}
l(v) f(v)\right|.
\end{equation}
\end{corollary}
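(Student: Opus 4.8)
The plan is to establish the two inequalities
$$\|f\|_{\tc}\ \ge\ \sup_{l\in\ml}\left|\sum_{v\in M} l(v)f(v)\right|
\qquad\text{and}\qquad
\|f\|_{\tc}\ \le\ \sup_{l\in\ml}\left|\sum_{v\in M} l(v)f(v)\right|$$
separately. The first is elementary and uses only the Lipschitz condition, while the second is where Theorem~\ref{T:WitnLip} does all the work.

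For the inequality $\|f\|_{\tc}\ge\sup_{l}|\cdots|$, I would fix an arbitrary transportation plan $f=\sum_{i=1}^n a_i(\1_{x_i}-\1_{y_i})$ with $a_i\ge 0$ solving $f$, together with an arbitrary $l\in\ml$. Substituting the plan and using linearity of $\sum_v l(v)f(v)$ in $f$ gives $\sum_{v\in M}l(v)f(v)=\sum_{i=1}^n a_i\bigl(l(x_i)-l(y_i)\bigr)$. Since $l$ is $1$-Lipschitz, $|l(x_i)-l(y_i)|\le d(x_i,y_i)$, so by the triangle inequality $\bigl|\sum_v l(v)f(v)\bigr|\le\sum_{i=1}^n a_i d(x_i,y_i)$, which is exactly the cost of the chosen plan. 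Taking the supremum over $l\in\ml$ and then the infimum over all plans solving $f$ yields the desired bound.

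For the reverse inequality, I would first dispose of the trivial case $f=0$, in which both sides vanish. For $f\ne 0$, recall that the infimum defining $\|f\|_\tc$ is attained (as noted in the discussion preceding the statement), so I choose an optimal plan $f=\sum_{i=1}^n a_i(\1_{x_i}-\1_{y_i})$ and discard any terms with $a_i=0$ or $x_i=y_i$, leaving all remaining coefficients strictly positive. Theorem~\ref{T:WitnLip} then provides a $1$-Lipschitz function $l$ with $l(x_i)-l(y_i)=d(x_i,y_i)$ for every surviving index $i$. For this particular $l$ we compute $\sum_{v\in M} l(v)f(v)=\sum_{i} a_i\bigl(l(x_i)-l(y_i)\bigr)=\sum_{i} a_i d(x_i,y_i)=\|f\|_\tc$, whence $\sup_{l\in\ml}\bigl|\sum_v l(v)f(v)\bigr|\ge\|f\|_\tc$.

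Combining the two inequalities gives the stated identity \eqref{E:TCLip}. There is no real obstacle in this argument: the entire difficulty has already been absorbed into Theorem~\ref{T:WitnLip}, which supplies a Lipschitz witness realizing the cost of an optimal plan. The only points demanding a little care are ensuring that the selected optimal plan has strictly positive weights (so that the theorem is applicable as stated) and observing that the absolute value in \eqref{E:TCLip} causes no difficulty, since replacing $l$ by $-l$ keeps it in $\ml$ and the witness $l$ constructed above already yields the nonnegative value $\|f\|_\tc$.
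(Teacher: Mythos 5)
Your proof is correct and is precisely the argument the paper has in mind: the authors give no proof, declaring the identity an immediate consequence of Theorem~\ref{T:WitnLip}, and your two inequalities (the easy bound from the $1$-Lipschitz condition, and the witness function from the theorem applied to an attained optimal plan) are exactly the standard way to make that ``immediate'' step explicit.
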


\begin{corollary} Embedding \eqref{E:EmbBasePt} is isometric
if we endow $\tp(M)$ with the norm $\|\cdot\|_{\tc}$.
\end{corollary}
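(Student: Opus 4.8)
The plan is to verify directly that the embedding \eqref{E:EmbBasePt} preserves distances, i.e.\ that $\|(\1_v-\1_O)-(\1_w-\1_O)\|_\tc = d(v,w)$ for all $v,w\in M$. Since the base-point terms cancel, this reduces to showing $\|\1_v-\1_w\|_\tc = d(v,w)$; in particular it suffices to treat an arbitrary pair of points rather than only pairs involving $O$. I would then establish the two matching inequalities separately.

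For the upper bound, I would exhibit a single transportation plan: taking $n=1$, $a_1=1$, $x_1=v$, $y_1=w$ gives $f=\1_v-\1_w$ with cost $1\cdot d(v,w)=d(v,w)$, so $\|\1_v-\1_w\|_\tc\le d(v,w)$ directly from the definition of the transportation cost as an infimum over solving plans.

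For the lower bound, I would invoke the dual description in \eqref{E:TCLip}, which specializes to $\|\1_v-\1_w\|_\tc=\sup_{l\in\mathcal{L}}|l(v)-l(w)|$. Choosing the test function $l(x)=d(x,w)$, which lies in $\mathcal{L}$ since it is $1$-Lipschitz by the triangle inequality, yields $|l(v)-l(w)|=|d(v,w)-0|=d(v,w)$, so the supremum is at least $d(v,w)$. Combining the two bounds gives the claimed equality. Alternatively, one reaches the same conclusion in one stroke via Theorem \ref{T:WitnLip}: the plan $f=1\cdot(\1_v-\1_w)$ is optimal because the $1$-Lipschitz function $l(x)=d(x,w)$ satisfies $l(v)-l(w)=d(v,w)$, whence the minimal cost equals $d(v,w)$.

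I do not expect a genuine obstacle here; the only point requiring a line of justification is that the witness $l(x)=d(x,w)$ is $1$-Lipschitz and attains the supremum, which is immediate from the triangle inequality. All the substance of the statement is carried by the preceding Kantorovich--Rubinstein duality, so the corollary is a short consequence once \eqref{E:TCLip} (equivalently Theorem \ref{T:WitnLip}) is available.
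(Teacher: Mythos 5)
Your proof is correct and is exactly the argument the paper intends: the paper leaves the corollary unproved as an immediate consequence of the duality formula \eqref{E:TCLip} (equivalently Theorem \ref{T:WitnLip}), and your combination of the one-term plan for the upper bound with the Fr\'echet-type witness $l(x)=d(x,w)$ for the lower bound is the standard way to make that immediacy explicit. No gaps.
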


It is not difficult to see that if $\mathcal{L}$ in the right-hand
side of \eqref{E:TCLip} is replaced with a subset $\mathcal{K}$ in
the set of all $1$-Lipschitz functions, one obtains a seminorm on
$\tp(M)$:
\begin{equation}\label{E:NormK}
\|f\|_{\mathcal{K}}=\sup_{l\in \mathcal{K}}|l(f)|,~\hbox{ where
}~l(f)=\sum_{v\in M} l(v) f(v).
\end{equation}
It is clear that embedding \eqref{E:EmbBasePt} is isometric as an
embedding from $M$ into the seminormed space $(\tp(M),
\|\cdot\|_{\mathcal{K}})$ if and only if for every two points
$u,v\in M$ and any $\ep>0$, there exists $l\in \mathcal{K}$ such
that $|l(u)-l(v)|\ge d(u,v)-\ep$. In this paper, our focus is
mainly on sets $\mathcal{K}$ satisfying the conditions:
\medskip

\noindent{\bf A.} All functions in $\mk$ are $1$-Lipschitz, that
is  ${\mathcal{K}}\subseteq {\mathcal{L}}.$
\medskip

\noindent{\bf B.} For every $u,v\in M$, there is a function
$l\in\mathcal{K}$ satisfying the condition $|l(u)-l(v)|=d(u,v)$.
\medskip

Obviously, for $\mk$ satisfying {\bf A} and {\bf B}, mapping
\eqref{E:EmbBasePt} is an isometric embedding of $M$ into
$(\tp(M), \|\cdot\|_{\mathcal{K}})$.
\medskip

The study of the seminormed spaces $(\tp(M),
\|\cdot\|_{\mathcal{K}})$ for $\mathcal{K}$ satisfying the
conditions {\bf A} and {\bf B} and different from $\mathcal{L}$
was initiated in \cite{MPV08}, and related results were obtained
in \cite{Zat08,Zat10}.
\medskip

Now we give two simple examples of function sets satisfying
conditions {\bf A} and {\bf B}.\smallskip

\noindent{\bf 1.} The set of all distance functions
$l_v(\cdot):=d(v,\cdot)$, $v\in M$. This set will be denoted by
$\mathcal{F}$ because  it was first used in the theory of metric
embeddings by Fr\'echet \cite{Fre10}.
\medskip

\noindent{\bf 2.} The set of all functions of the form
\begin{equation}\label{E:phi}\phi_{u,v}=\frac{d(v,\cdot)-d(u,\cdot)}2,\quad
u,v\in M.\end{equation} We denote this set by $\mathcal{DP}$ ({\it
double-point}). The set was introduced in \cite[Section
1.2.2]{MPV08}.
\medskip

In cases where $\|\cdot\|_\mk$ is a norm, we denote the completion
of the normed space $(\tp(M), \|\cdot\|_{\mathcal{K}})$ by
$\tp_\mk(M)$. Also, in the cases where $\mk=\ml, \mdp$, or $\mf$,
we use $\tp_\ml(M)$, $\tp_\mdp(M)$, or $\tp_\mf(M)$, respectively.
Observe that $\tp_\ml(M)=\tc(M)$. The same notation will be used
in cases where $\|\cdot\|_\mk$ is a seminorm. In such cases,
$\tp_\mk(M)$ denotes  the completion of the quotient of $\tp(M)$
over $\ker\|\cdot\|_\mk$ with respect to the norm induced by
$\|\cdot\|_\mk$ on this quotient.

\subsection{Statement of results}\label{S:Results}

Section \ref{S:Smallest} is devoted to analysis of the comment of
Melleray, Petrov, and  Vershik \cite[Comment~3, p.~185]{MPV08}
which can be, by Proposition \ref{P:AnyX}, restated as: {\it In
contrast to the existence of the maximal norm of the form
$\|\cdot\|_\mk$ with $\mk$ satisfying {\bf A} and {\bf B}, there
is no minimal norm of the form $\|\cdot\|_\mk$; moreover, it can
happen that for a given norm of this form the infimum of the norms
which are less than a given norm, is a seminorm, but not a norm.}

We show in Corollary \ref{C:NoMin}, that for finite metric spaces,
with exception of a small class, the infimum of seminorms of the
form $\|\cdot\|_\mk$ with $\mk$ satisfying {\bf A} and {\bf B} is
not a seminorm.

As it was discovered over the last decade, one of the differences
between  spaces $\tc(M)$  and other spaces of the form
$\tp_{\mk}(M)$, is a substantial ``presence'' of
$\ell_1$-subspaces in $\tc(M)$. Compare the results in
\cite{God10, Dal15, CD16, CDW16, CJ17, DKO18+} and Proposition
\ref{P:AnyX}, which shows that there does not have to be any such
presence in $\tp_{\mk}(M)$ for general $\mk$. Section
\ref{S:ell_1} is devoted to $\ell_1$-subspaces in $\tc(M)$. The
main result of this section, Theorem \ref{T:ell_1}, gives a
negative answer to the following question \cite[Question 2,
p.~3410]{CJ17}: {\it Let $M$ be an infinite uniformly discrete
metric space. Does $\tc(M)$ contain a subspace isometric to
$\ell_1$?} Recall that a metric space $M$ is called {\it uniformly
discrete} if there exists a constant $\delta
> 0$ such that
\[\forall u,v\in X~(u\ne v)\Rightarrow d_X(u,v)\ge\delta.\]

In Section \ref{S:KerDP} it is shown that there exist a class of
metric spaces for which the double-point norm introduced in
\cite{MPV08} is not a norm - it has a nontrivial kernel.

In Section \ref{S:DPforTree} it is proved that the space
$\tp_{\mdp}(M)$ for a tree $M$ is close to $\ell_\infty^n$ of the
corresponding dimension, and thus is quite different from the
transportation cost space on a tree.

Recall that a metric space is called {\it locally finite} if all
of its balls of finite radius have finite cardinality. In Section
\ref{S:Repr} a class of locally finite metric spaces $M$
satisfying the following condition is found: all Banach spaces
containing $M$ isometrically contain linear isometric copies of
$\ell_1$. In  particular, this is true for spaces of the form
$\tp_\mk(M)$.  This result is also motivated by the following
problem considered in \cite{KL08,OO19,OO19a}:

\begin{problem}\label{P:D(X)>1} For what Banach spaces $X$ do there exist locally finite
metric spaces $M$ such that each finite subset of $M$ embeds
isometrically into $X$, but $M$ does not embed isometrically into
$X$?
\end{problem}

Our result reveals a new  class of Banach spaces for which the
phenomenon described in Problem \ref{P:D(X)>1} occurs.

\subsection{Some interesting directions in the theory of $\tp_\mk(M)$
spaces}\label{S:Directions}

We refer to \cite{Mau03} for basic theory of cotype of Banach
spaces, and to \cite{Ost13} for relevance of this theory for
metric embeddings as well as for an additional background needed
for reading this section.\medskip

In our opinion, one of the most interesting and challenging
directions in the study of the spaces $\tp_\mk(M)$ is related to
the following well-known facts: (a) Finite metric spaces admit
bilipschitz embeddings with distortions arbitrary close to $1$
into every Banach space with trivial cotype; (b) Locally finite
metric spaces admit bilipschitz embeddings into every Banach space
with trivial cotype, whose distortions are bounded by an absolute
constant, see \cite{BL08,Ost12} and \cite[Chapters 1 and
2]{Ost13}; in \cite{OO19} it was shown that this constant does not
exceed $4+\ep$ for every $\ep>0$.

In order to determine whether  isometric embeddings of $M$ into
$\tp_\mk(M)$ are of a different nature, it is crucial to develop
tools needed to advance the following direction of research.

\begin{dir}\label{D:NL} Characterize metric spaces $M$ for which we can find a set $\mk$ of functions on $M$ satisfying conditions {\bf A} and {\bf B},
and such that $\tp_\mk(M)$ has nontrivial cotype.
\end{dir}

The discussion presented in \cite[Section 11.1]{Ost13} suggests
the important relevant problem:

\begin{problem}\label{P:ExpNL}
Can one find a sequence $\{G_n\}_{n=1}^\infty$ of expanders  and
the corresponding sets $\mathcal{K}_n$ of Lipschitz functions
satisfying conditions {\bf A} and {\bf B} such that the direct sum
$\left(\oplus_{n=1}^\infty \tp_{\mk_n}(G_n)\right)_2$ has
nontrivial cotype?
\end{problem}

It should be mentioned that, by virtue of  Proposition
\ref{P:AnyX},  each Banach space containing $M$ isometrically also
contains a subspace isometric to $\tp_\mk(M)$ for suitably chosen
$\mk$ satisfying {\bf A} and {\bf B}. This shows the significance
of the following general direction in the study of $\tp_\mk(M)$:

\begin{dir} Given a metric space $M$, find sets $\mk$ of
functions on $M$ satisfying conditions {\bf A} and {\bf B} for
which one can describe the Banach-space-theoretical structure of
$\tp_\mk(M)$.
\end{dir}

Another direction which we regard as fruitful is:

\begin{dir}\label{D:AllTP_K} Find metric spaces $M$ for which
the linear structure of Banach spaces $\tp_\mk(M)$ satisfies
certain geometric conditions for every choice of $\mk$ satisfying
the conditions {\bf A} and {\bf B}.
\end{dir}

Proposition \ref{P:AnyX} reveals that Direction \ref{D:AllTP_K} is
similar to the following: {\it Find metric spaces for which the
linear structure of Banach spaces admitting an isometric embedding
of $M$ satisfies certain geometric conditions.}

A few results of this type are already available: Godefroy and
Kalton \cite{GK03} proved that if $M$ is a separable Banach space,
then any Banach space containing isometric copy of $M$ contains a
linearly isometric copy of $M$. Dutrieux and Lancien \cite{DL08}
introduced the notion of a {\it representing subset} and found
several interesting examples of such sets. The definition of
representing subsets is provided in Section \ref{S:Repr}, where
the existence of locally finite metric spaces representing
$\ell_1$ is proved.

\subsection{Isometric embeddings into Banach spaces and spaces
$\tp_{\mk}(M)$}

The goal of this section is to show that Banach spaces of the form
$\tp_\mk(M)$ are present in all Banach spaces containing isometric
copies of $M$.

\begin{proposition}\label{P:AnyX} If a metric space $M$ admits an isometric embedding into a
Banach space $X$, then there exists a set $\mk$ of functions on
$M$ satisfying {\bf A} and {\bf B} such that $\tp_\mk(M)$ is
linearly isometric to a subspace of $X$.
\end{proposition}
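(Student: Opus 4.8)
The plan is to manufacture the required set $\mk$ directly from the dual space $X^*$, using an isometric embedding $\iota\colon M\to X$ as the source of Lipschitz functions. For each functional $\phi\in X^*$ with $\|\phi\|\le 1$, define $l_\phi\colon M\to\mathbb{R}$ by $l_\phi(v)=\phi(\iota(v))$, and set $\mk=\{l_\phi:\phi\in X^*,\ \|\phi\|\le 1\}$. First I would check that this $\mk$ satisfies conditions {\bf A} and {\bf B}. Condition {\bf A} is immediate: since $\iota$ is isometric, $|l_\phi(u)-l_\phi(v)|=|\phi(\iota(u)-\iota(v))|\le\|\phi\|\,\|\iota(u)-\iota(v)\|=\|\phi\|\,d(u,v)\le d(u,v)$, so each $l_\phi$ is $1$-Lipschitz. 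For condition {\bf B}, fix $u,v\in M$; since $\|\iota(u)-\iota(v)\|=d(u,v)$, the Hahn--Banach theorem furnishes a norming functional $\phi\in X^*$ with $\|\phi\|=1$ and $\phi(\iota(u)-\iota(v))=d(u,v)$, whence $|l_\phi(u)-l_\phi(v)|=d(u,v)$.

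The heart of the argument is a duality identity. Define a linear map $T\colon\tp(M)\to X$ by $T(f)=\sum_{v\in M}f(v)\iota(v)$; this is a finite sum, hence well defined. I would then compute, for any $f\in\tp(M)$,
\[
\|T(f)\|_X=\sup_{\|\phi\|\le 1}\left|\phi\Big(\sum_{v\in M}f(v)\iota(v)\Big)\right|=\sup_{\|\phi\|\le 1}\left|\sum_{v\in M}f(v)\,l_\phi(v)\right|=\sup_{l\in\mk}|l(f)|=\|f\|_{\mk},
\]
where the first equality is the norm-by-duality formula in $X$, the third holds because $\mk$ is by construction the image of the unit ball of $X^*$ under $\phi\mapsto l_\phi$, and the last is the definition \eqref{E:NormK}. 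Thus $T$ is norm-preserving from $(\tp(M),\|\cdot\|_\mk)$ into $X$.

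Finally I would pass to $\tp_\mk(M)$. The identity $\|f\|_\mk=\|T(f)\|_X$ shows that the kernel of the seminorm $\|\cdot\|_\mk$ coincides with $\ker T$, so $T$ descends to an injective linear isometry from the quotient $\tp(M)/\ker\|\cdot\|_\mk$ into $X$. Since $X$ is complete, this isometry extends by continuity to the completion, that is, to $\tp_\mk(M)$, and its image is a closed, hence linearly isometric, copy of $\tp_\mk(M)$ inside $X$, as required.

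The steps above are largely routine manipulations of the definitions; I expect the only genuinely substantive input to be the twofold use of Hahn--Banach --- once, via norming functionals, to secure condition {\bf B}, and again, via the duality formula $\|x\|_X=\sup_{\|\phi\|\le 1}|\phi(x)|$, to produce the isometry $\|f\|_\mk=\|T(f)\|_X$. The one point requiring a little care is the possibly degenerate case in which $T$ fails to be injective (which occurs precisely when the points $\iota(v)$ carry a nontrivial affine dependence), so that $\|\cdot\|_\mk$ is a genuine seminorm rather than a norm; here one must invoke the convention of Section \ref{S:MoreNorms} that $\tp_\mk(M)$ is formed as the completion of the quotient by $\ker\|\cdot\|_\mk$, and verify that $T$ factors correctly through this quotient.
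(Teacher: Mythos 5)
Your proposal is correct and follows essentially the same route as the paper: both take $\mk$ to be the restrictions to $M$ of the norm-one (or norm-at-most-one, which gives the same suprema) functionals on $X$, verify \textbf{A} and \textbf{B} via Hahn--Banach, and obtain the isometry from the duality formula $\|x\|_X=\sup_{\|\phi\|\le 1}|\phi(x)|$ applied to $T(f)=\sum_v f(v)\iota(v)$. Your explicit treatment of the kernel and the passage to the quotient is a slightly more careful rendering of what the paper handles in the remark following the proposition, but it is not a different argument.
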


\begin{remark} Since $\|\cdot\|_\mk$ can have a nontrivial kernel,
a linear isometry $E:\tp_\mk(M)\to X$ is understood as a linear
map satisfying $\|Ex\|_X=\|x\|_\mk$, although this map can have a
kernel.
\end{remark}

\begin{proof} The isometric image of $M$ in $X$ may be shifted so that one of the elements of $M$ coincides with $0$. With this in mind, let us
identify elements of $M$ and their isometric images in $X$.

Denote by $X^*$ the dual space of $X$ and by $S(X^*)$ its unit
sphere. It is clear that the restrictions of elements of $S(X^*)$
to $M$ are $1$-Lipschitz functions. Denote by $\mathcal{S}$ this
set of restrictions. Let us show that the space $\tp_\ms(M)$
admits a linear isometric embedding into $X$ given by
$\1_v-\1_0\mapsto v$, where $v\in M$ is identified with its image
in $X$.

It suffices to establish that, for any finite collections
$\{a_i\}\subset\mathbb{R}$ and $\{v_i\}\subset M$, the equality
\[\left\|\sum_i
a_i(\1_{v_i}-\1_0)\right\|_\ms=\left\|\sum_ia_iv_i\right\|_X\]
holds. Setting $f=\sum_ia_i(\1_{v_i}-\1_0)\in\tp(M)$, one arrives
at:

\[\begin{split}\left\|\sum_ia_iv_i\right\|_X&=\sup_{x^*\in
S(X^*)}\left|\sum_ia_ix^*(v_i)\right|\\&= \sup_{x^*\in
S(X^*)}\left|\sum_ia_i(x^*(v_i)-x^*(0))\right|\\&=
\sup_{l\in\ms}\left|\sum_{v\in M}l(v)f(v)\right|\\&=\left\|\sum_i
a_i(\1_{v_i}-\1_0)\right\|_\ms.
 \qedhere\end{split}\]
\end{proof}

\begin{remark} In particular, $\tp_\mk(M)$ can be strictly convex,
which makes it different from $\tp_\ml(M)$, see \cite[Proposition
2]{CJ17}. It should be  mentioned that some metric spaces, e.g.
unweighted graphs which are neither complete graphs nor paths
\cite[Observation 5.1]{Ost13b} do not admit isometric embeddings
into strictly convex Banach spaces.
\end{remark}

\subsection{Historical and terminological
remarks}\label{S:HistTerm}

We are aware of three directions of research for which it was
natural to introduce notions which either coincide or are closely
related to the notions of the transportation cost and the
transportation cost space. These are:

\begin{itemize}

\item[(1)] Study of algebraically ``free'' topological (or metric)
structures which contain a given topological (or metric) structure
as a substructure.

\item[(2)] Developing the notion of a distance between two
probability distributions on a metric space.

\item[(3)] Studying the notion of a transport of one finite
positive measure into another.

\end{itemize}

Some of the works representing direction (1) are: Markov
\cite{Mar41, Mar45}, Shimrat \cite{Shi54}, Arens-Eells
\cite{AE56}, Michael \cite{Mic64}, Kadets \cite{Kad85}, Pestov
\cite{Pes86}, Weaver \cite{Wea99}, Godefroy-Kalton \cite{GK03}.

Arens and Eells \cite{AE56} introduced, for a metric space $M$,
the linear space which we denote $\tp(M)$ and the norm on it,
which we denote $\|\cdot\|_\tc$. Their goal for introducing these
notions was to prove the following result: ``Every metric space
can be isometrically embedded as a closed subset of a normed
linear space''. In this connection, they derived a version of
Theorem \ref{T:WitnLip}, more precisely, they proved that the dual
of the normed space $(\tp(M),\|\cdot\|_\tc)$ is the space of
Lipschitz functions vanishing at a base point. Arens and Eells
\cite{AE56} did not consider the completion of this normed space
because for the completion the stated above result is false.

The completion of the space constructed by Arens and Eells was
considered by Kadets \cite{Kad85}, Pestov \cite{Pes86}, Weaver
\cite{Wea99}, and Godefroy-Kalton \cite{GK03} (see also
\cite{Mic64}). Weaver \cite[Definition 2.2.1]{Wea99} defined, what
he named {\it Arens-Eells space}, as the completion of
$(\tp(M),\|\cdot\|_\tc)$. Kadets, Pestov, and Godefroy-Kalton
defined an equivalent object in the dual way. Namely, they
considered the space $\lip_0(M)$ of all Lipschitz functions on the
space $M$ which vanish at a base point $O$. This is a Banach space
with respect to  the norm defined as the Lipschitz constant.
Kadets, Pestov, and Godefroy-Kalton consider the closed subspace
of $(\lip_0(M))^*$ spanned by the point evaluation functionals on
$\lip_0(M)$. We denote the point evaluation functional
corresponding to point $x$ by $\delta(x)$ and exclude $\delta(O)$
from consideration because it is a zero functional.

\begin{observation}\label{O:LFvsTC} The norm of a finite linear combination $\sum_{x\in
A}a_x\delta(x)$ in the dual space $(\lip_0(M))^*$  is the same as
the transportation cost of the transportation problem $\sum_{x\in
A}a_x(\1_x-\1_O)$.
\end{observation}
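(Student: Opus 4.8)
The plan is to unwind both sides of the claimed equality to a common primal description, namely the transportation cost, and check that they agree. First I would recall the two ingredients. On the one hand, by definition $\|\sum_{x\in A}a_x(\1_x-\1_O)\|_\tc$ is the minimal cost of a transportation plan solving the problem $f=\sum_{x\in A}a_x(\1_x-\1_O)$, and by the Corollary to Theorem \ref{T:WitnLip} it admits the dual (Kantorovich--Rubinstein) description
\[
\left\|\sum_{x\in A}a_x(\1_x-\1_O)\right\|_\tc
=\sup_{l\in\ml}\left|\sum_{v\in M}l(v)f(v)\right|
=\sup_{l\in\ml}\left|\sum_{x\in A}a_x\,l(x)\right|,
\]
where the last step uses $f(O)=-\sum_{x\in A}a_x$ together with the fact that $l\in\ml$ may be assumed to vanish at $O$ (shifting $l$ by the constant $l(O)$ changes neither its Lipschitz constant nor the value of $\sum_{v}l(v)f(v)$, since $\sum_v f(v)=0$).

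Next I would compute the dual-space norm. By definition of the norm on $(\lip_0(M))^*$, for a finite linear combination of point evaluations one has
\[
\left\|\sum_{x\in A}a_x\delta(x)\right\|_{(\lip_0(M))^*}
=\sup\left\{\left|\left(\sum_{x\in A}a_x\delta(x)\right)(l)\right|:\ l\in\lip_0(M),\ \|l\|_{\lip_0}\le 1\right\}.
\]
Evaluating the functional gives $(\sum_x a_x\delta(x))(l)=\sum_{x\in A}a_x\,l(x)$, so the right-hand side is exactly $\sup\{|\sum_{x\in A}a_x l(x)|:\ l\in\lip_0(M),\ \|l\|_{\lip_0}\le 1\}$. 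The elements of $\lip_0(M)$ of norm at most $1$ are precisely the $1$-Lipschitz functions on $M$ vanishing at $O$.

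Finally I would reconcile the two supremum descriptions. The transportation-cost expression ranges over all $l\in\ml$ (all $1$-Lipschitz functions), whereas the dual-norm expression ranges over $1$-Lipschitz functions with the normalization $l(O)=0$. The step that does the real work — and the only point requiring care — is seeing that these two suprema coincide: adding a constant to $l$ does not affect $\sum_{x\in A}a_x l(x)=\sum_{v\in M}l(v)f(v)$ because $f$ has zero sum, and it does not affect membership in $\ml$; hence restricting to $l(O)=0$ is harmless, and the two suprema are equal. I expect no genuine obstacle here, since everything follows from the Kantorovich--Rubinstein duality already recorded in the excerpt; the main thing to get right is the bookkeeping of the base point $O$ and the zero-sum condition that makes the normalization $l(O)=0$ free of charge. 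Chaining the three identities then yields $\|\sum_{x\in A}a_x\delta(x)\|_{(\lip_0(M))^*}=\|\sum_{x\in A}a_x(\1_x-\1_O)\|_\tc$, which is the assertion of Observation \ref{O:LFvsTC}.
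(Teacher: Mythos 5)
Your argument is correct and rests on exactly the same fact as the paper's one-line proof, namely the duality between $(\tp(M),\|\cdot\|_\tc)$ and $\lip_0(M)$ (equivalently, the Kantorovich--Rubinstein formula \eqref{E:TCLip}); you merely unpack it by computing both norms as suprema over $1$-Lipschitz functions and noting that the normalization $l(O)=0$ costs nothing because $f$ has zero sum. This is the paper's approach spelled out in detail, and the bookkeeping of the base point is handled correctly.
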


This observation follows immediately from the fact that
$\lip_0(M)$ is the dual of $(\tp(M),\|\cdot\|_\tc)$. Thus, the
spaces studied by Kadets, Pestov, and Godefroy-Kalton are all
isometric to the completion of $(\tp(M),\|\cdot\|_\tc)$. Kadets
\cite{Kad85} denoted this space $\widetilde X$ and did not give
any name to it, Pestov \cite{Pes86} called it the {\it free Banach
space}, and Godefroy-Kalton \cite{GK03} called it the {\it
Lipschitz-free Banach space}. Thus all these names correspond to
spaces which are canonically (in the sense of Observation
\ref{O:LFvsTC}) isometric to the space $\tc(M)$.
\medskip

Apparently, it is impossible to list all works corresponding to
the directions (2) and (3). An ample bibliography, a wide range of
contributors, and relevant discussions are presented by Villani
\cite[pp.~106--111]{Vil09}. We mention only the names of
Kantorovich \cite{Kan42,Kan11}, Kantorovich-Gavurin \cite{KG49},
and Kantorovich-Rubinstein \cite{KR57, KR58} for direction (3) and
Vasershtein \cite{Vas69} (currently spelled as Wasserstein) for
direction (2).
\medskip

Kantorovich and Gavurin \cite{KG49} introduced $\tp(M)$,
$\|\cdot\|_\tc$, observed Theorem \ref{T:WitnLip}, and developed
an approach to finding transportation plans of minimum cost.

A nice source for learning the basic definitions and results of
directions corresponding to (2) and (3) above is \cite[Chapters 1
and 7]{Vil03}. Another interesting source is \cite[\S 4 in Chapter
VIII]{KA84} (see also English translation in \cite{KA82}). Note
that in \cite{Vil03} the discussion is mostly limited to
probability measures, while in \cite{KA82,KA84} the discussion is
limited to compact metric spaces. As it is pointed out in
\cite{Vil03}, a  passage from arbitrary finite positive Borel
measures to probability measures can be achieved by normalization.
As for compactness, all of the main results can be generalized to
the setting of general complete separable metric space (see
\cite{Vil03}), such spaces are also called {\it Polish spaces}.
There are some obstacles which are to be overcame for some of more
general spaces, see Remark \ref{R:Gen}.

Let us present basic notions of the theory developed in
\cite{KA82,KA84,Vil03}. For a Polish space $(M,d)$, let $\Bs(M)$
denote the linear space of all finite Borel probability measures
$\mu$ on $X$ satisfying
\begin{equation}\label{E:BddMom}\int_M d(x,x_0)\,d\mu(x)<\infty\end{equation} for some (hence all)
$x_0\in M$. A {\it coupling} of a pair of finite positive Borel
measures $(\mu,\nu)$ with the same total mass on $M$ is a Borel
measure $\pi$ on $M\times M$ such that $\mu(A)=\pi(A\times M)$ and
$\nu(A)=\pi(M\times A)$ for every Borel measurable $A\subset M$.
The set of couplings of $(\mu,\nu)$ is denoted $\Pi(\mu,\nu)$. The
quantity
$$
\mathcal{T}_1(\mu,\nu):= \inf_{\pi\in
\Pi(\mu,\nu)}\bigg(\iint_{M\times M} d(x,y)\,d\pi(x,y)\bigg)
$$
is called the {\it minimal translocation work} between $\mu,\nu\in
\Bs(X)$ in \cite{Kan42} and the {\it optimal transportation cost}
in \cite[p.~3]{Vil03}. Kantorovich and Rubinstein \cite{KR57,KR58}
(see also \cite[Section 7.1]{Vil03}) proved that the set
$\mathcal{M}$ of all differences $\mu-\nu$ of measures satisfying
the conditions above forms a normed space if we endow it with the
norm
\begin{equation}\label{E:KR}\|\mu-\nu\|_{\kr}=\mathcal{T}_1(\mu,\nu),\end{equation} and that the
dual of the space $(\mathcal{M},\|\cdot\|_\kr)$ is the space
$\lip_0(M)$ with its usual norm. Observe that in the case where
$\mu$ and $\nu$ are atomic measures with finitely many atoms, the
difference $\mu-\nu$ can be regarded as an element of $\tp(M)$,
and we have $\|\mu-\nu\|_{\kr}=\|\mu-\nu\|_{\tc}$.

The normed space $(\mathcal{M}, \|\cdot\|_\kr)$ is not complete if
the space $M$ is not uniformly discrete. In fact, if there is a
sequence of pairs $x_i,y_i\in M$ such that all elements of the set
$\{x_i,y_i\}$ are distinct and $d(x_i,y_i)\le 2^{-i}$, then the
sequence of measures
$\left\{\sum_{i=1}^n(\delta(x_i)-\delta(y_i))\right\}_{n=1}^\infty$,
where $\delta(x)$ is the unit atomic measure supported on $\{x\}$,
converges in the norm described above, but not to a difference of
two finite measures. This example is well-known, see
\cite[Proposition 2.3.2]{Wea99}.
\medskip

The relation between the normed spaces $(\mathcal{M},
\|\cdot\|_\kr)$  and the transportation cost spaces (as we define
them in Section \ref{S:Def}) is described in the following result
(see Weaver \cite[Section 2.3]{Wea99} or \cite[Section 3.3]{Wea18}
for the case where the metric space $M$ is compact), which shows
that the completion of $(\mathcal{M},\|\cdot\|_\kr)$ coincides
with $\tc(M)$. We believe that this result is known to experts.
However, since a suitable reference has not been found, its proof
is presented below.

\begin{theorem}\label{T:TCvsKR} If $(M,d)$ is a Polish metric space, the space
$(\tp(M),\|\cdot\|_\tc)$ is dense in
$(\mathcal{M},\|\cdot\|_\kr)$. Hence $\tc(M)$ can be regarded as
the completion of $(\mathcal{M},\|\cdot\|_\kr)$.
\end{theorem}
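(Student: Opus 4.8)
The plan is to reduce the theorem to a one-measure approximation statement and then to build explicit near-optimal couplings. Since $\|\cdot\|_\kr$ is a norm on $\mathcal{M}$ (as recalled above) that restricts to $\|\cdot\|_\tc$ on the subspace $\tp(M)$, and since every finitely supported zero-sum function is the difference of its positive and negative parts, i.e. of two finitely atomic positive measures of equal mass, and hence lies in $\mathcal{M}$, it is enough to show that an arbitrary $\rho=\mu-\nu\in\mathcal{M}$ can be approximated in $\|\cdot\|_\kr$ by elements of $\tp(M)$. Here $\mu,\nu$ are finite positive Borel measures of a common total mass $m$ with finite first moment. If I can produce, for each such $\mu$ and each $\varepsilon>0$, a finitely atomic positive measure $\mu'$ of total mass $m$ with $\mathcal{T}_1(\mu,\mu')<\varepsilon$, then choosing $\nu'$ analogously for $\nu$ gives $\mu'-\nu'\in\tp(M)$ and, by the triangle inequality for $\|\cdot\|_\kr$,
\[
\|\rho-(\mu'-\nu')\|_\kr\le\|\mu-\mu'\|_\kr+\|\nu-\nu'\|_\kr=\mathcal{T}_1(\mu,\mu')+\mathcal{T}_1(\nu,\nu')<2\varepsilon .
\]
Thus the whole theorem rests on this single approximation lemma.

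To prove the lemma, I would first separate a compact bulk from a light tail. Fix a base point $x_0\in M$. Since $\int_M d(x,x_0)\,d\mu(x)<\infty$, choosing $R$ large makes both $\int_{\{d(\cdot,x_0)>R\}}d(x,x_0)\,d\mu$ and $\mu\{d(\cdot,x_0)>R\}$ as small as desired; combining this with Ulam's tightness theorem for finite Borel measures on a Polish space, I obtain a compact set $K$ for which $\mu(M\setminus K)$ and $\int_{M\setminus K}d(x,x_0)\,d\mu$ are both small. On the compact part I partition $K$ into finitely many Borel sets $A_1,\dots,A_N$ of diameter $<\eta$ (cover $K$ by finitely many small balls and disjointify), pick a representative $p_j\in A_j$, and replace $\mu|_K$ by $\sum_j\mu(A_j)\,\delta(p_j)$; the coupling transporting the mass of each $A_j$ from its location to $p_j$ has cost $\sum_j\int_{A_j}d(x,p_j)\,d\mu\le\eta\,\mu(K)\le\eta m$. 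The tail $\mu|_{M\setminus K}$ I collapse to the single atom $\mu(M\setminus K)\,\delta(x_0)$ at the base point, whose transport cost is exactly $\int_{M\setminus K}d(x,x_0)\,d\mu$, again small. Setting $\mu'=\sum_j\mu(A_j)\,\delta(p_j)+\mu(M\setminus K)\,\delta(x_0)$ yields a finitely atomic measure of total mass $\mu(K)+\mu(M\setminus K)=m$, and the sum of the two couplings above is a coupling of $(\mu,\mu')$ witnessing $\mathcal{T}_1(\mu,\mu')\le\eta m+\int_{M\setminus K}d(x,x_0)\,d\mu<\varepsilon$ for suitable $\eta$ and $K$.

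Two routine facts underpin the estimate: that total mass is preserved at every step, so that all differences genuinely stay in $\mathcal{M}$ and $\mu'-\nu'$ lands in $\tp(M)$; and that couplings add, i.e. if $\pi_i$ couples $(\mu_i,\mu_i')$ then $\pi_1+\pi_2$ couples $(\mu_1+\mu_2,\mu_1'+\mu_2')$ with additive cost, so that $\mathcal{T}_1$ is bounded above by the cost of the assembled coupling. I expect the main obstacle to be the tail control rather than the compact partition: one must use the finiteness of the first moment (not merely tightness) to guarantee that relocating the escaped mass to $x_0$ is cheap, and one must keep masses balanced so that no auxiliary transport between $\mu'$ and $\nu'$ is ever needed. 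Once the lemma is in hand, the density of $(\tp(M),\|\cdot\|_\tc)$ in $(\mathcal{M},\|\cdot\|_\kr)$ follows; and since $\tc(M)$ is by definition the completion of $(\tp(M),\|\cdot\|_\tc)$ and a dense subspace has the same completion as the whole space, $\tc(M)$ is isometric to the completion of $(\mathcal{M},\|\cdot\|_\kr)$, which is the asserted ``Hence''.
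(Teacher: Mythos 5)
Your argument is correct, and it reaches the density statement by a genuinely different route from the paper. The paper works on the dual side: it uses the Kantorovich--Rubinstein duality (the fact that $\lip_0(M)$ is the dual of both $(\tp(M),\|\cdot\|_\tc)$ and $(\mathcal{M},\|\cdot\|_\kr)$) to reduce the problem to showing $|l(\mu-\nu)-l(f)|<\ep$ uniformly over all $1$-Lipschitz $l$ with $l(O)=0$; it then builds a single approximant $f$ for the difference $\mu-\nu$, with values $\mu(K_n)-\nu(K_n)$ on a fine partition of a compact bulk and a balancing correction placed at the base point $O$, and estimates each term through the Lipschitz condition. You instead stay on the primal side: you discretize $\mu$ and $\nu$ separately into finitely atomic measures $\mu',\nu'$ of the same total mass, exhibit an explicit coupling of $(\mu,\mu')$ whose cost bounds $\mathcal{T}_1(\mu,\mu')$ from above, and conclude by the triangle inequality for $\|\cdot\|_\kr$. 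The underlying discretization is the same in both proofs (first-moment control of the tail, Ulam's theorem to pass to a compact set, a finite partition into sets of small diameter collapsed to atoms), but the error estimates differ in kind: the paper's route leans on the duality theorem to identify the norm with a supremum over Lipschitz test functions, whereas yours uses only the elementary inequality $\mathcal{T}_1(\mu,\mu')\le$ cost of any explicit coupling, plus the standing fact that $\|\cdot\|_\kr$ is a norm agreeing with $\|\cdot\|_\tc$ on $\tp(M)$. Your version is thus somewhat more self-contained on this point; the paper's version avoids having to balance $\mu'$ and $\nu'$ separately, since its single correction atom at $O$ does the bookkeeping, and it makes the role of the common dual space explicit, which is the theme the surrounding section emphasizes. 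Both handle the tail the same way: it is the finiteness of the first moment, not mere tightness, that makes relocating the escaped mass to the base point cheap, and you correctly isolate that as the essential use of condition \eqref{E:BddMom}.
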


\begin{proof} Fix the base point $O$. Since $\tp(M)\subset \mathcal{M}$ and for both spaces the dual space is $\lip_0(M)$, it suffices to show
that for each pair $\mu,\nu$ of finite Borel measures on $M$
satisfying \eqref{E:BddMom} and having the same total masses, and
for each $\ep>0$, there exists $f\in\tp(M)$ such that, for every
$1$-Lipschitz function $l$ on $M$ satisfying $l(O)=0$, there
holds:

\begin{equation}\label{E:Des}|l(\mu-\nu)- l(f)|<\ep.\end{equation}

Denote by $B(O,R)$ the closed ball in $M$ of radius $R$ centered
at $O$. Using condition \eqref{E:BddMom} for both $\mu$ and $\nu$,
we conclude that there exists $R\in (0,\infty)$ satisfying

\[\left|\int_{M\backslash B(O,R)}l(v) d\mu(v)\right|\le \int_{M\backslash B(O,R)}d(O,v) d\mu(v)<\frac{\ep}6\]
and
\[\left|\int_{M\backslash B(O,R)}l(v) d\nu(v)\right|\le \int_{M\backslash B(O,R)}d(O,v) d\nu(v)<\frac{\ep}6.\]

By Ulam's theorem (see \cite[Theorem 1.4]{Bil68} and \cite[Theorem
16.3.1]{Gar18}), there exists a compact set $K\subset B(O,R)$ such
that
\[\mu(B(O,R)\backslash K)<\frac{\ep}{6R}~\hbox{ and }~
\nu(B(O,R)\backslash K)<\frac{\ep}{6R},\] whence both
\[\left|\int_{B(O,R)\backslash K}l(v) d\mu(v)\right|<\frac{\ep}6\]
and
\[\left|\int_{B(O,R)\backslash K}l(v) d\nu(v)\right|<\frac{\ep}6.\]

Next, we split $K$ into a finite number $\{K_n\}_{n\in T}$ of
pairwise disjoint Borel subsets of diameter
$<\ep/(3(\mu(M)+\nu(M)))$ each. Define a function $f$ on $M$ as
follows: In each of the sets $K_n$, we pick a point $t_n$ and let
$f(t_n)=\mu(K_n)-\nu(K_n)$ for $n\in T$. If $f$ is extended as $0$
to the rest of $M$, the obtained function is not necessarily in
$\tp(M)$. To balance this, the $0$-extension is modified at point
$O$ by taking $f(O)=-\sum_{n\in T}f(t_n)$, thus implying
$f\in\tp(M)$. If $t_n\ne O$, the following inequality holds:
\[\begin{split}&\left|\int_{K_n}l(v)d(\mu-\nu)(v)-l(t_n)f(t_n)\right|=
\left|\int_{K_n}(l(v)-l(t_n))d(\mu-\nu)(v)\right|\\&\qquad\le
\int_{K_n}|l(v)-l(t_n)|d\mu(v)+\int_{K_n}|l(v)-l(t_n)|d\nu(v)\le
\frac\ep3\cdot\frac{\mu(K_n)+\nu(K_n)}{\mu(K)+\nu(K)},\end{split}\]
where, in the last inequality,  the fact that $l$ is $1$-Lipschitz
and the assumption on the diameter of $K_n$ are used.

Since $l(O)=0$, one has:
\[\begin{split}|l&(\mu-\nu)- l(f)|\le \sum_{n\in
T}\left|\int_{K_n}l(v)d(\mu-\nu)(v)-l(t_n)f(t_n)\right|+\left|\int_{B(O,R)\backslash
K}l(v) d\mu(v)\right|\\&+\left|\int_{B(O,R)\backslash K}l(v)
d\nu(v)\right|+\left|\int_{M\backslash B(O,R)}l(v)
d\mu(v)\right|+\left|\int_{M\backslash B(O,R)}l(v)
d\nu(v)\right|\\&<\frac\ep3+\frac\ep6+\frac\ep6+\frac\ep6+\frac\ep6=\ep.\qedhere\end{split}
\]
\end{proof}

\begin{remark}\label{R:Gen} We do not know how to prove an analogue of Theorem \ref{T:TCvsKR} in the case of general metric
spaces. See \cite[p.~234--235]{Bil68} in this connection.
Nonseparability will not be an obstacle if we consider uniformly
discrete spaces, because in such spaces finite Borel measures have
countable support. The general metric case is beyond the subject
of this work.  For a systematic exposition of generalizations of
results of Kantorovich and Rubinstein we refer the reader to
\cite{RR98}.
\end{remark}

Finally, it is worth  mentioning that in Computer Science the
transportation cost is often called {\it Earth Mover's Distance}.
This name was introduced by Rubner-Tomasi-Guibas \cite{RTG98} in
their work on computer vision. They knew the notion of
transportation cost and their goal was to generalize it to the
cases where total demand can be less than the total supply. This
more general case is not considered in the present paper as such
generalized transportation problems do not form a vector space.
\medskip

Several authors published their opinions on the most suitable
choice of the name for the spaces which we call {\it
transportation cost spaces}. Vershik \cite{Ver04,Ver13} provided
an argument in favor of the name {\it Kantorovich space}. Villani
\cite[pp.~106--107]{Vil09} decided in favor of Wasserstein, and
Weaver \cite[p.~125]{Wea18} defended the name {\it Arens-Eells
space}. It is interesting to mention that Villani decided in favor
of Wasserstein only because this term is more popular on the
Internet than the others. In view of the information presented
above, the argument of Vershik towards the {\it Kantorovich space}
is the most convincing. However, we decided not to follow
Vershik's suggestion because the term {\it Kantorovich space} has
already become a standard term for another object in Functional
Analysis, see, for example, \cite{KK04}. On the other hand, the
relevance of the term {\it transportation cost} is mentioned even
by the authors who are on the side of other terms, see
\cite[p.~762]{Nao18}, \cite[Introduction]{Vil03} ({\it cost of
transfer plan} in \cite[Section 3.3]{Wea18}).

\section{On the smallest seminorm of the type $\|\cdot
\|_{\mathcal{K}}$}\label{S:Smallest}

Obviously,  for every metric space $M$, the norm
$\|\cdot\|_{\mathcal{L}}$ is the largest seminorm of the type
$\|\cdot \|_{\mathcal{K}}$ for $\mathcal{K}$ satisfying conditions
{\bf A} and {\bf B}. In contrast, the class of finite metric
spaces $M$ for which $\tp(M)$ possesses the smallest seminorm of
the type $\|\cdot \|_{\mathcal{K}}$ for $\mathcal{K}$ satisfying
conditions {\bf A} and {\bf B} is rather narrow; and the goal of
this section is to present its complete description. We start with
a very simple existence result.

\begin{proposition}\label{P:SubR} Let $M$ be a finite subset of $\mathbb{R}$
with the induced metric. Then there exists the smallest seminorm
of the form $\|\cdot \|_{\mathcal{K}}$ on $\tp(M)$.
\end{proposition}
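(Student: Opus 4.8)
The plan is to produce an explicit candidate for the smallest seminorm and then show that condition {\bf B}, applied at the two extreme points of $M$, forces every admissible $\mk$ to dominate it. Write $M=\{x_1<x_2<\dots<x_n\}$ and let $h\colon M\to\mathbb{R}$ be the inclusion, $h(x)=x$. Since $|h(u)-h(v)|=|u-v|=d(u,v)$ for all $u,v\in M$, the one-element set $\mk_0=\{h\}$ satisfies {\bf A} (the inclusion is $1$-Lipschitz) and {\bf B}. I claim $\|\cdot\|_{\mk_0}$ is the desired smallest seminorm; note that for $f\in\tp(M)$ the zero-sum condition $\sum_{v\in M}f(v)=0$ gives
\[
\|f\|_{\mk_0}=|h(f)|=\Bigl|\sum_{v\in M}v\,f(v)\Bigr|.
\]

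Next I would check minimality. Let $\mk$ be any set satisfying {\bf A} and {\bf B}, and apply {\bf B} to the pair $(x_1,x_n)$ to obtain $l\in\mk$ with $|l(x_1)-l(x_n)|=x_n-x_1$. Because $l$ is $1$-Lipschitz, telescoping along the ordered points yields
\[
|l(x_n)-l(x_1)|\le\sum_{i=1}^{n-1}|l(x_{i+1})-l(x_i)|\le\sum_{i=1}^{n-1}(x_{i+1}-x_i)=x_n-x_1,
\]
so equality must hold throughout. This forces $l(x_{i+1})-l(x_i)=\sigma\,(x_{i+1}-x_i)$ with one common sign $\sigma\in\{+1,-1\}$ for every $i$, that is, $l(v)=\sigma v+c$ on $M$ for some constant $c$. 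Evaluating on $f$ and using the zero-sum condition to kill the constant, I get $|l(f)|=\bigl|\sum_{v}v\,f(v)\bigr|=\|f\|_{\mk_0}$; since $l\in\mk$ this gives $\|f\|_{\mk}\ge|l(f)|=\|f\|_{\mk_0}$ for every $f$. Hence $\|\cdot\|_{\mk_0}$ lies below every admissible seminorm and is itself admissible, so it is the smallest.

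The one step I expect to require care is the rigidity observation in the second paragraph: that equality in the chained $1$-Lipschitz estimate pins $l$ down to be $\pm h$ plus a constant on \emph{all} of $M$, not merely at the endpoints. This is precisely where the one-dimensionality of the ambient space enters, since the telescoping along $x_1<\dots<x_n$ leaves no slack and forces a single common sign at every consecutive gap. Everything else is a routine consequence of the zero-sum defining property of $\tp(M)$.
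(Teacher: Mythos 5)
Your proof is correct and follows essentially the same route as the paper: both single out a one-element set $\mathcal{K}$ consisting of (a translate of) the inclusion $h(x)=x$ and use condition {\bf B} at the extreme pair $(x_1,x_n)$ to force every admissible $\mathcal{K}$ to contain a function of the form $\pm h+\mathrm{const}$, whose action on zero-sum functions is determined. Your telescoping argument merely spells out the rigidity step that the paper dismisses as ``clear,'' so there is nothing to add.
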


\begin{proof} In fact, let $M=\{x_1,\dots,x_n\}\subset\mathbb{R}$ with
$x_1<x_2<\dots<x_n$. By the condition {\bf B}, the set
$\mathcal{K}$ should contain a $1$-Lipschitz function $l$ such
that $|l(x_1)-l(x_n)|=|x_1-x_n|$.  It is clear that, for the same
function $l$, we have $|l(x_i)-l(x_j)|=|x_i-x_j|$, and also that
any two such functions can by obtained from each other by adding a
constant and multiplying by $\pm 1$. Thus, any $1$-element set
$\mathcal{K}$ containing such function leads to the minimal
semi-norm of the described type.\end{proof}

In the sequel, the following generalization of the notion of a
linear triple (\cite[p.~56]{Blu53}) will be used.

\begin{definition}\label{D:LinearnTup} A collection $r=\{r_i\}_{i=1}^n$, $n\ge 3$, of points in a metric space $(M,d)$ is called a {\it linear
tuple} if the sequence $\{d(r_i,r_1)\}_{i=1}^n$ is strictly
increasing and if, for $1\le i<j<k\le n$, the equality below
holds:
\begin{equation}\label{E:TriangleEq}
d(r_i,r_k)=d(r_i,r_j)+d(r_j,r_k).
\end{equation}
A {\it linear triple} is a linear tuple with $n=3$.
\end{definition}

Our next goal is to describe the condition which, for finite
metric spaces $M$, is equivalent to existence of the smallest
seminorm of the form  $\|\cdot\|_\mk$ on $\tp(M)$.
\medskip

In the following we assume that a metric space $M$ contains at
least two points.

\begin{definition} We say that a metric space $M$ satisfies the {\it min-condition} if it contains a finite set of pairs
$\{u_i,v_i\}$ $(i\in \mathcal{I})$ having the following two
properties:

\begin{itemize}

\item[{\bf I.}] For each $i\in \mathcal{I}$, every  point $x$ in
$M$, different from $u_i$ and $v_i$, is such that $u_i,x,v_i$ is a
linear triple.

\item[{\bf II.}] For each pair $\{x,y\}$ of distinct points in $M$
there is $i\in \mathcal{I}$ such that exactly one of the following
four conditions holds:

\begin{enumerate}

\item The pairs  $\{x,y\}$ and $\{u_i,v_i\}$ coincide.

\item  Exactly one of the points  $\{x,y\}$ coincides with one of
the points $\{u_i,v_i\}$, and the remaining point, denote it $z$,
is such that $u_i,z,v_i$ is a linear triple.

\item $u_i,x,y,v_i$ is a linear tuple.

\item $u_i,y,x,v_i$ is a linear tuple.

\end{enumerate}

If pairs $\{u_i,v_i\}$ and $\{x,y\}$ in $M$ satisfy one of the
conditions 1--4 in {\bf II} we say that $\{x,y\}$ is {\it on a
geodesic between} $u_i$ and $v_i$.

\end{itemize}

\end{definition}

To exemplify this definition,  notice  that a finite subset $M$ of
$\mathbb{R}$ satisfies the min-condition and the corresponding set
of pairs consists of one pair $\{u_1,v_1\}$ where $u_1$ is the
minimal element of $M$ and $v_1$ is the maximal element of $M$.

More interesting examples of metric spaces satisfying the
min-condition are even cycles - in terminology of Graph Theory -
with their graph distances. More general examples are weighted
even cycles provided that the weights are symmetric in the
following sense: if we label vertices by $x_1,\dots,x_{2n}$ in the
cyclic order, the weight of the edge joining $x_k$ and $x_{k+1}$
is the same as the weight of the edge joining $x_{n+k}$ and
$x_{n+k+1}$, where the addition is $\mod(2n)$. The corresponding
set of pairs is the set $\{x_i,x_{i+n}\}$, $i=1,\dots,n$.

The main result in this section is:

\begin{theorem}\label{T:DNE} A finite metric space $M$ satisfies the min-condition if and only if there exists the smallest seminorm of the
form $\|\cdot\|_\mk$ on $\tp(M)$ with $\mk$ satisfying conditions
{\bf A} and {\bf B}.
\end{theorem}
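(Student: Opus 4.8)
The plan is to pass to the dual picture. Fix a base point and let $P\subseteq\tp(M)^*$ be the symmetric convex polytope of all $1$-Lipschitz functions on $M$ taken modulo additive constants; since elements of $\tp(M)$ have zero sum, $l(f)$ depends only on the class of $l$ in $P$. For a pair $\{x,y\}$ put $e_{xy}=\1_x-\1_y$ and let $F_{xy}=\{l\in P:\ l(x)-l(y)=d(x,y)\}$ be the face of $P$ exposed by $e_{xy}$; this is exactly the set of functions witnessing $\{x,y\}$. A set $\mk$ obeying {\bf A} gives the closed symmetric convex hull $B_\mk$ of $\mk$ inside $P$, and condition {\bf B} says precisely that $B_\mk$ meets every face $F_{xy}$; call such a ball \emph{valid}. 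Since $\|f\|_\mk\le\|f\|_{\mk'}$ for all $f$ iff $B_\mk\subseteq B_{\mk'}$, a smallest seminorm exists if and only if $B_0:=\bigcap\{B:\ B\ \text{valid}\}$ is itself valid. Two metric facts drive everything. (i) If $l$ witnesses $\{x,y\}$ and $z$ is between $x$ and $y$ (that is, $x,z,y$ is a linear triple) then $l(z)$ is forced, while if $z$ is not between them then $l(z)$ is free in the nondegenerate interval $[d(x,y)-d(x,z),\,d(z,y)]$; hence $F_{xy}$ is a single point exactly when every other point lies between $x$ and $y$, i.e. exactly when $\{x,y\}$ satisfies property {\bf I}. (ii) For such a pair the unique witness is $l(\cdot)=d(x,\cdot)$ up to sign and constant, and it witnesses a further pair $\{s,t\}$ iff $\{s,t\}$ lies on a geodesic between $x$ and $y$. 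Thus the min-condition holds iff the collection of \emph{all} property-{\bf I} pairs satisfies property {\bf II}, i.e. their unique witnesses, which I assemble into a set $W\subseteq P$, jointly witness every pair.

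For the direction min-condition $\Rightarrow$ smallest exists I would take $\mk_0=\{d(u_i,\cdot):\ i\in\mi\}$ for the property-{\bf I} pairs $\{u_i,v_i\}$ supplied by the min-condition. By (ii) and property {\bf II} the set $\mk_0$ satisfies {\bf A} and {\bf B}. To see that it is smallest, let $\mk$ obey {\bf A} and {\bf B}. By {\bf B} there is $l\in\mk$ with $|l(u_i)-l(v_i)|=d(u_i,v_i)$; by (i), property {\bf I} forces $l$ to agree with $\pm d(u_i,\cdot)$ up to an additive constant on all of $M$, so $l(f)=\pm l_i(f)$ for every $f\in\tp(M)$, the constant cancelling against the zero sum. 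Hence $|l_i(f)|\le\|f\|_\mk$ for each $i$, and the supremum over $i$ gives $\|f\|_{\mk_0}\le\|f\|_\mk$. This half is clean and uses only the uniqueness in (i).

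For the converse I would argue by contraposition. If the min-condition fails, the reduction above yields a pair $\{x_0,y_0\}$ on no geodesic between any property-{\bf I} pair, equivalently witnessed by no element of $W$. The target is the Crux Lemma: no witness of $\{x_0,y_0\}$ is forced, so $B_0\cap F_{x_0y_0}=\varnothing$, $B_0$ fails to be valid, and no smallest seminorm exists. The mechanism is the identity $B_0=\conv(W)$. One inclusion is immediate: each point of $W$ is the unique witness of its pair, hence lies in every valid ball, so $\conv(W)\subseteq B_0$. For the reverse inclusion it suffices to show $\operatorname{ext}(B_0)\subseteq W$, which I split into (a) every extreme point $p$ of $B_0$ is a vertex of $P$, and (b) a forced vertex $p$ is the unique witness of some pair. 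Step (b) is clean: if not, then every face $F_{xy}$ containing $p$ has a second vertex, and since $-p\notin F_{xy}$ (else $0=2d(x,y)$), one may pick for each pair a vertex witness in $\{p,-p\}^{c}$; the resulting valid ball is the symmetric hull of vertices of $P$ other than $p$, which cannot contain the vertex $p$, contradicting that $p$ is forced. Hence $p\in W$, so $B_0=\conv(\operatorname{ext}(B_0))\subseteq\conv(W)$ and $B_0=\conv(W)$; as $\{x_0,y_0\}$ is witnessed by no element of $W$, $\sup_{l\in B_0}(l(x_0)-l(y_0))<d(x_0,y_0)$, finishing the contrapositive.

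The main obstacle is step (a): an extreme point of the \emph{intersection} $B_0$ need not obviously be a vertex of the ambient polytope $P$. The intended argument is a bypass: if $p$ lay in the relative interior of a positive-dimensional face $G$ of $P$, then every pair witnessed by $p$ is witnessed by all of $G$, hence by a single chosen vertex $g_0$ of $G$, so one reroutes those pairs through $g_0$ and witnesses the remaining pairs by vertices on which the exposing element $h\in\tp(M)$ of $G$ (the one with $l\mapsto l(h)$ maximized on $P$ exactly along $G$) stays strictly below its value on $G$; the functional $h$ then separates $p\in\operatorname{relint}(G)$ from the new valid ball. The delicate point is that a remaining pair may have its \emph{entire} witnessing face contained in $G$, so that its vertex witness is trapped at the top level of $h$ and the convex combinations inside $G$ might reconstitute $p$. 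Resolving this—by choosing $h$ in general position relative to the finitely many faces $F_{xy}$, and by an inductive or linear-programming selection of vertex witnesses that keeps $p$ outside their hull—is where finite-dimensional convex geometry, rather than the metric combinatorics, carries the proof; once (a) is secured the equivalence follows from the two facts (i),(ii) and step (b).
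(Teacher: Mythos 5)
Your first half (min-condition $\Rightarrow$ a smallest seminorm exists) is correct and is essentially the paper's argument: property {\bf I} pins the witness of $\{u_i,v_i\}$ down to $\pm d(u_i,\cdot)$ plus a constant, the constant dies against the zero sum, so every admissible $\mk$ dominates the seminorm generated by these forced witnesses, and property {\bf II} is exactly what makes that family itself satisfy {\bf B}. Your dual setup (faces $F_{xy}$ of the polytope $P$, validity of a ball meaning it meets every face, and the equivalence ``smallest exists iff $B_0=\bigcap\{B:B\text{ valid}\}$ is valid'') is also sound, as are your metric facts (i) and (ii).

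The converse, however, has a genuine gap, and it is worse than the ``delicate point'' you flag in step (a): the identity $B_0=\conv(\pm W)$ that your whole contrapositive rests on is false, so step (a) is not a technicality to be patched but a false statement. Take $M=\{a,b,c\}$ equilateral with all distances $1$. No pair has property {\bf I} (the third point is never between the other two), so $W=\varnothing$ and your identity would force $B_0=\{0\}$. But here $P$ is a hexagon in the two-dimensional quotient, its six edges are the faces $F_{xy}$, and every valid ball contains the symmetric convex hull of one witness per face; such a hull is a centrally symmetric hexagon inscribed with one vertex on each edge of $P$, and the closest any edge of such an inscribed hexagon can come to the centre is along a long diagonal of $P$, which stays at a fixed positive distance. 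Hence every valid ball, and therefore $B_0$, contains a fixed neighbourhood of the origin; in particular the extreme points of $B_0$ are not vertices of $P$ and are not in $W$. (The theorem's conclusion does survive in this example: intersecting two of the ``thin'' valid balls $\conv(\pm l,\pm l')$ built from adjacent vertices shows $B_0$ still misses each face --- but not by your mechanism.) The paper proves this direction without ever touching $B_0$: it extracts a minimal set of pairs by discarding pairs lying on geodesics of others, notes that if the min-condition fails some surviving pair $(u_1,v_1)$ admits $w$ with $d(u_1,v_1)<d(u_1,w)+d(w,v_1)$, and then, for an arbitrary admissible $\mk$, writes down an explicit competitor $\widetilde\mk$ (distance functions from the endpoints of the minimal pairs with $v_1$ omitted) and an explicit $f=\tau(\1_{v_1}-\1_{u_1})+(\1_{v_1}-\1_w)$ satisfying $\|f\|_{\widetilde\mk}<\|f\|_{\mk}$ for large $\tau$, so that no single $\mk$ can be minimal. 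You would need to replace the convex-geometric identification of $B_0$ by some such direct construction of a strictly smaller competitor.
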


\begin{proof}  ``Only if'': For each $i$, consider a $1$-Lipschitz function $l_i$ such that $d(u_i,v_i)=|l_i(u_i)-l_i(v_i)|$. Condition {\bf I}
implies that the function $l_i$ is uniquely determined up to
addition of a constant and multiplication by $-1$. Therefore, each
set ${\mathcal{K}}$ satisfying both {\bf A} and {\bf B} should
contain at least one representative from each set $\mathbb{L}_i$
of functions obtained from $l_i$ by adding all possible real
constants and multiplying by $\pm 1$. Let $\mr$ be a collection of
such representatives, we select one representative in each
$\mathbb{L}_i$. Evidently, $\|\cdot\|_\mk\ge \|\cdot\|_\mr$. On
the other hand, condition {\bf II} implies that any such $\mr$
satisfies condition {\bf B}. Thus, the seminorm corresponding to
the set $\mr$ is the smallest seminorm of the desired type.
\medskip

``If'': Let us construct a set which can be called a {\it minimal
set of pairs} as follows. Starting with the set comprising all
pairs of distinct points in $M$, we remove those pairs which are
on geodesics between other pairs. This procedure results in  a set
of pairs $\{(u_i,v_i)\}$ satisfying {\bf II}. If it satisfies {\bf
I}, then $M$ satisfies the min-condition.

It is clear that to complete the proof it suffices to show that if
the obtained set of pairs does not satisfy {\bf I}, then for any
set $\mk$ satisfying {\bf A} and {\bf B} on $M$, there is another
set $\widetilde\mk$ satisfying {\bf A} and {\bf B} on $M$ and a
function $f\in\tp(M)$ such that $\|f\|_{\widetilde\mk}<\|f\|_\mk$.

It can be noticed  that $\mk$ is equivalent - in the sense that
induces the same seminorm $\|\cdot\|_\mk$ -  to some set of
$1$-Lipschitz functions containing $1$-Lipschitz functions $l_i$
satisfying $l_i(u_i)=0$, $l_i(v_i)=d(u_i,v_i)$.  It may be assumed
without loss of generality that the pair $(u_1,v_1)$ does not
satisfy {\bf I}, implying that there is $w\in M$ such that

\begin{equation}\label{E:(1)}
d(u_1,v_1)<d(u_1,w)+d(w,v_1).
\end{equation}

In addition, the next two inequalities hold because otherwise the
pair $(u_1,v_1)$ should be deleted from the minimal collection of
pairs:

\begin{equation}\label{E:(2)}
d(u_1,w)<d(u_1,v_1)+d(v_1,w)
\end{equation}

\begin{equation}\label{E:(3)}
d(v_1,w)<d(u_1,v_1)+d(u_1,w)
\end{equation}

The value of $l_1(w)$ is in the interval $[d(u_1,v_1)-d(w,v_1),
d(u_1,w)]$. On the other hand,  inequality \eqref{E:(1)} implies
that this interval does not reduce to one point, and consequently
at least one of the following inequalities holds:
\begin{equation}\label{E:Cond's} l_1(w)< d(u_1,w),\quad
l_1(w)>d(u_1,v_1)-d(w,v_1).\end{equation}

Despite the asymmetry between the different conditions in
\eqref{E:Cond's} caused by different roles of $u_1$ and $v_1$ in
the definition of $l_1$, one can check that the cases in
\eqref{E:Cond's} can be considered in a similar way. Thence, it
suffices only to consider the case $l_1(w)< d(u_1,w)$. In this
case,  consider the function $f\in\tp(M)$ given by
\[f=\tau(\1_{v_1}-\1_{u_1})+(\1_{v_1}-\1_w),\]
where $\tau>0$ will be selected later. Let $\widetilde \mk$ be
given as the set of all functions of the form $l_z:=d(z,\cdot)$
where $z$ is any of the endpoints of pairs $\{(u_i,v_i)\}$ except
$v_1$. The choice of $\{(u_i,v_i)\}$ implies that $\widetilde \mk$
satisfies condition {\bf B}. It is clear that $\widetilde \mk$
also satisfies condition {\bf A}. It remains to show that there
exists $f\in\tp(M)$ such that $\|f\|_\mk>\|f\|_{\widetilde \mk}$.

Observe that
\[\|f\|_\mk\ge |l_1(f)|=\left|(\tau+1)d(u_1,v_1)-l_1(w)\right|.\]
We assume that $\tau>0$ is large enough to ensure that
\[|l_1(f)|=(\tau+1)d(u_1,v_1)-l_1(w)>(\tau+1)d(u_1,v_1)-d(u_1,w)>0,\]
whence $l_1(f)>l_{u_1}(f)>0$.  To complete the proof of
$\|f\|_\mk>\|f\|_{\widetilde \mk}$, it has to be shown that for a
suitably chosen $\tau>0$, one has $l_1(f)>|l_z(f)|$ for all $z$ of
the described type. Indeed,
\[l_z(f)=\tau(d(z,v_1)-d(z,u_1))+(d(z,v_1)-d(z,w)),\]
and, therefore
\[|l_z(f)|\le\tau|d(z,v_1)-d(z,u_1)|+d(v_1,w).\]

To achieve the desired goal, observe that
\[|d(z,v_1)-d(z,u_1)|<d(u_1,v_1)\]
for every $z\ne u_1$, because otherwise $\{u_1,v_1\}$ would be on
a geodesic of either between $u_1$ and $z$, or between $v_1$ and
$z$. In any of the cases we get a contradiction with the fact that
the pair $\{u_1,v_1\}$ belongs to the minimal set of pairs. Thus,
for a sufficiently large $\tau>0$, the inequality
\[(\tau+1)d(u_1,v_1)-d(u_1,w)>\tau|d(z,v_1)-d(z,u_1)|+d(v_1,w)\]
holds for all $z\ne u_1$ belonging to the described set, and we
are done.
\end{proof}

\begin{corollary}\label{C:NoMin} Let $M$ be a finite metric space.
If $M$ does not satisfy the min-condition, then the infimum of all
seminorms $\|\cdot\|_\mk$ on $\tp(M)$ over $\mk$ satisfying the
conditions {\bf A} and {\bf B} is not a seminorm.
\end{corollary}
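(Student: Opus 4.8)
The plan is to argue by contraposition, combining a few soft structural observations about the infimum with a finite-dimensional duality argument, and then to invoke Theorem \ref{T:DNE}. Write
\[
\rho(f)=\inf_{\mk}\|f\|_\mk,
\]
the infimum being over all $\mk$ satisfying {\bf A} and {\bf B}. The strategy is to show that \emph{if} $\rho$ happens to be a seminorm, then it is itself of the form $\|\cdot\|_{\mk_0}$ for an admissible $\mk_0$, whence it is the smallest seminorm of this type, forcing $M$ to satisfy the min-condition and contradicting the hypothesis.

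First I would record the easy facts. Since each $\|\cdot\|_\mk$ is absolutely homogeneous, so is the pointwise infimum $\rho$; hence $\rho$ is a seminorm precisely when it is subadditive, and it is enough to rule out subadditivity. Conditions {\bf A} and {\bf B} give $\|\1_u-\1_v\|_\mk=d(u,v)$ for every admissible $\mk$ and every pair $u\neq v$ (the $1$-Lipschitz bound yields ``$\le$'', condition {\bf B} yields ``$\ge$''), so $\rho(\1_u-\1_v)=d(u,v)$; in particular the canonical embedding is still isometric into $(\tp(M),\rho)$. Finally, $\mk\subseteq\ml$ gives $\|\cdot\|_\mk\le\|\cdot\|_\ml$, and taking $\mk=\ml$ shows the infimum is over a nonempty family, so $\rho\le\|\cdot\|_\ml<\infty$.

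Now suppose toward a contradiction that $\rho$ is a seminorm. Here I use that $M$ is finite, so $\tp(M)$ is finite-dimensional and its dual is naturally identified with functions on $M$ modulo additive constants. Let $\mk_0$ be the polar of the unit ball of $\rho$, that is, the set of functions $l$ with $|l(f)|\le\rho(f)$ for all $f\in\tp(M)$. Because $\rho\le\|\cdot\|_\ml$, every such $l$ is $1$-Lipschitz, so $\mk_0\subseteq\ml$ is a closed, convex, symmetric, and hence compact set, giving {\bf A}. By Hahn--Banach (equivalently, the bipolar theorem, valid since $\rho$ is assumed to be a seminorm) one recovers $\rho=\|\cdot\|_{\mk_0}$. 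For {\bf B}, fix a pair $\{u,v\}$: from $d(u,v)=\rho(\1_u-\1_v)=\max_{l\in\mk_0}|l(u)-l(v)|$, with the maximum attained by compactness, some $l\in\mk_0$ realizes $|l(u)-l(v)|=d(u,v)$. Thus $\mk_0$ satisfies {\bf A} and {\bf B} and induces exactly $\rho$.

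To finish, since $\rho\le\|\cdot\|_\mk$ for every admissible $\mk$ and $\rho=\|\cdot\|_{\mk_0}$ with $\mk_0$ admissible, the seminorm $\|\cdot\|_{\mk_0}$ is a smallest element of the family. By Theorem \ref{T:DNE} the existence of such a smallest seminorm forces $M$ to satisfy the min-condition, contradicting the hypothesis; hence $\rho$ is not a seminorm. The step I expect to demand the most care is the duality identification $\rho=\|\cdot\|_{\mk_0}$ together with the verification of {\bf B}: one must use $\rho\le\|\cdot\|_\ml$ to guarantee that $\mk_0$ is compact so the relevant suprema are attained, and one must be careful that the bipolar recovery of $\rho$ is legitimate exactly under the standing assumption that $\rho$ is a seminorm (this is precisely the assumption being refuted).
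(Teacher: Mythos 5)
Your proof is correct and follows essentially the same route as the paper: assume the infimum $\rho$ is a seminorm, show it is then itself realized as $\|\cdot\|_{\mk_0}$ for an admissible $\mk_0$, and conclude via Theorem \ref{T:DNE} that $M$ would have to satisfy the min-condition. The only difference is presentational: where you run the Hahn--Banach/polar argument inline (using compactness of the $1$-Lipschitz ball to verify {\bf B}), the paper obtains the same realization by passing to the quotient of $(\tp(M),\rho)$ by its kernel and citing Proposition \ref{P:AnyX}, whose proof is exactly this duality argument.
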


\begin{proof} Assume the contrary. Let $\|\cdot\|_{\inf}$
be the seminorm on $\tp(M)$, which is an infimum of all seminorms
of the form $\|\cdot\|_\mk$, with $\mk$ satisfying conditions {\bf
A} and {\bf B}. Let $X$ be the quotient of the seminormed space
$(\tp(M),\|\cdot\|_{\inf})$ by the kernel of $\|\cdot\|_{\inf}$
and
 $O$ denote the base point in $M$. For each element $v \in M$,
denote by $\tilde v$ the image of $\1_v-\1_O$ in $X$. It follows
from the description of conditions {\bf A} and {\bf B} - see the
paragraph below the description - that the map $v\mapsto \tilde v$
is an isometry of $M$ into $X$. It is also clear that $\tilde
O=0$.

By Proposition \ref{P:AnyX}, there exists a seminorm of the form
$\|\cdot\|_\mn$ on $\tp(M)$ with $\mn$ satisfying {\bf A} and {\bf
B} and such that
\[\left\|\sum_ia_i(\1_{v_i}-\1_{O})\right\|_\mn=\left\|\sum_ia_i{\tilde v_i}\right\|_X\]
for any $\sum_ia_i(\1_{v_i}-\1_{O})\in\tp(M)$. On the other hand,
by the construction \[\left\|\sum_ia_i{\tilde
v_i}\right\|_X=\inf_\mk\left\|\sum_ia_i(\1_{v_i}-\1_{O})\right\|_\mk.\]

Since $M$ does not satisfy the min-condition, by Theorem
\ref{T:DNE}, $\|f\|_{\inf}$ is strictly less than $\|f\|_\mn$ for
some functions $f\in\tp(M)$, which is a contradiction because any
function in $\tp(M)$ can be written in the form
$\sum_ia_i(\1_{v_i}-\1_{O})$.
\end{proof}

\begin{remark} For small metric spaces the result of Corollary
\ref{C:NoMin} admits a simple direct proof. Consider, for example,
an equilateral set $M=\{a,b,c\}$ with all distances equal to $1$.
Let $\mk_1=\{d(x,a), d(x,b)\}$ and $\mk_2=\{d(x,b),
d(x,c)\}$. %, $\mk_3=\{d(x,a),d(x,c)\}$.
It is clear that both sets satisfy the conditions {\bf A} and {\bf
B}. It is easy to check that
\[\|2\1_a-\1_b-\1_c\|_{\mk_2}=1 \quad\hbox{ and }\quad
\|\1_a+\1_b-2\1_c\|_{\mk_1}=1.\] Therefore
\[\|2\1_a-\1_b-\1_c\|_{\inf}\le1 \quad\hbox{ and }\quad
\|\1_a+\1_b-2\1_c\|_{\inf}\le 1.\] On the other hand
\[\|(2\1_a-\1_b-\1_c)+(\1_a+\1_b-2\1_c)\|_{\inf}=\|3(\1_a-\1_c)\|_{\inf}.\]
If $\|\cdot\|_{\inf}$ would be a seminorm, this would lead to a
contradiction because $\|3(\1_a-\1_c)\|_{\mk}=3$ for each $\mk$
satisfying {\bf A} and {\bf B}.
\end{remark}

\section{On $\ell_1$-subspaces in $\tp_{\ml}(M)$}\label{S:ell_1}

The next statement brings out one of the main outcomes of this
paper.

\begin{theorem}\label{T:ell_1} There exists an infinite uniformly discrete metric space $M$ such that
$\tc(M)$ does not contain an isometric copy of $\ell_1$.
\end{theorem}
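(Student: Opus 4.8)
The plan is to translate the statement into a question about $1$-Lipschitz functions via the duality $\tc(M)^*=\lip_0(M)$ and the formula \eqref{E:TCLip}, and then to build a bounded, uniformly discrete $M$ whose betweenness structure is rigid enough to obstruct the Lipschitz functions that an isometric copy of $\ell_1$ would require.

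First I would record the following reformulation. A sequence $(x_n)$ of unit vectors in $\tc(M)$ spans an isometric copy of $\ell_1$ exactly when $\|\sum_n a_n x_n\|_{\tc}=\sum_n|a_n|$ for all finitely supported scalars $(a_n)$. By \eqref{E:TCLip} the upper bound $\|\sum_n a_n x_n\|_{\tc}\le\sum_n|a_n|$ is automatic, so only the reverse is at stake; and for a fixed $(a_n)$ the reverse holds iff there is a $1$-Lipschitz $l$ with $\langle x_n,l\rangle=\sign(a_n)$ on the support of $(a_n)$. Hence an isometric $\ell_1$ is \emph{equivalent} to: for every finite sign vector $\sigma$ there is a norm-one $l\in\lip_0(M)$ with $\langle x_n,l\rangle=\sigma_n$ on the relevant indices. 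Since $\tp(M)$ is dense in $\tc(M)$ I would first approximate the $x_n$ by genuine transportation problems, keeping an $\ep$ of room throughout (isometry is rigid, so I let $\ep\to0$ only at the end). This places the whole problem inside $\tp(M)$ and Theorem \ref{T:WitnLip}, so that each required sign realization becomes an optimal-transport statement: a prescribed family of oriented transports must be \emph{simultaneously} optimal.

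Next I would construct $M$. The guiding principle is that \emph{unbounded} or \emph{too flexible} spaces always contain $\ell_1$: well separated pairs of nearby points, or locally free Lipschitz functions (as on trees, subsets of $\mathbb{R}$, ultrametric or equilateral spaces), let one orient infinitely many disjoint differences $\1_b-\1_a$ independently by local bumps. I therefore take $M$ bounded and uniformly discrete, with all distances comparable, and saturated with betweenness relations arranged so that orientations \emph{interfere}. The mechanism is the transport identity, obtained from Theorem \ref{T:WitnLip}, that for $f=(\1_b-\1_a)+(\1_d-\1_c)$ one has $\|f\|_{\tc}=\min\{d(a,b)+d(c,d),\,d(a,d)+d(c,b)\}$, and analogously for the opposite sign; thus a sign pattern is ``straight-optimal'' only when the competing cross-matching is not cheaper. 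I would design $M$ as an infinite bounded limit of gadgets of this interfering type (even cycles, which are rich in betweenness, are the model), so that every infinite family of candidate coordinate vectors inherits, in infinitely many places, a strictly cheaper re-routing $d(a,d)+d(c,b)<d(a,b)+d(c,d)$ that destroys some sign pattern.

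The main obstacle, and the heart of the argument, is that an isometric copy of $\ell_1$ need not be spanned by molecules: the $x_n$ are arbitrary elements of the completion. To handle this I would extract from any putative isometric $\ell_1$ a rigid family of Lipschitz functions. Weak${}^*$ compactness of the unit ball of $\lip_0(M)$ supplies a norming $l_+$ for the all-plus pattern and an $l_{(k)}$ for the pattern flipping the $k$-th sign; then $h_k=(l_+-l_{(k)})/2$ is $1$-Lipschitz with $\langle x_k,h_k\rangle=1$ and $\langle x_n,h_k\rangle=0$ for $n\ne k$, a biorthogonal system of norming $1$-Lipschitz functions. Passing to weak${}^*$ limits of the $h_k$ together with the supports of the finitely supported approximants of the $x_k$, I would show that the limiting data force, somewhere in $M$, two disjoint oriented transports to be simultaneously optimal, in violation of the interference built into $M$; after letting $\ep\to0$ this contradiction rules out an isometric $\ell_1$. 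Controlling arbitrary non-molecular $x_n$ and the interaction of their supports across the completion is where essentially all of the work lies.
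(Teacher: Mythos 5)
Your dual reformulation is sound (an isometric $\ell_1$-basis is equivalent to norming $1$-Lipschitz functions for every finite sign pattern, and the biorthogonal system $h_k=(l_+-l_{(k)})/2$ is a legitimate extraction), and your instinct that $M$ should be bounded, uniformly discrete, with all distances comparable, is exactly right. But the proposal stops precisely where the proof has to happen, and in two places it points the wrong way. First, you never construct $M$, and the guiding heuristic --- a space ``saturated with betweenness relations,'' built from even-cycle gadgets --- is essentially the opposite of what works. The paper's example is $M=\mathbb{N}$ with $d(i,j)=h(\min\{i,j\})$ for a strictly increasing $h:\mathbb{N}\to(1,2)$: a near-equilateral space with \emph{no} nontrivial betweenness at all, in which \emph{every} triangle inequality is strict. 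The obstruction is not interference of geodesics but the fact that crossing transports can always be rerouted strictly more cheaply (strict triangle inequalities force any isometric $\ell_1$-basis to be disjointly supported), combined with the strict monotonicity of $h$, which makes the cost of an optimal plan depend only on the lower half of the support (formula \eqref{E:OptCost}) and strictly decrease when mass is matched at smaller indices. A disjoint union of even cycles placed ``far apart'' would in fact contain isometric $\ell_1$ via independently oriented local bumps, so the gadget picture needs the global near-equilateral coupling that you have not supplied.

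Second, your plan for non-molecular elements --- approximate the $x_n$ by elements of $\tp(M)$, ``keep an $\ep$ of room throughout,'' and let $\ep\to 0$ only at the end --- cannot succeed as described. For the kind of space at issue, $\tc(M)$ is a hyperplane of $\ell_1$ under an equivalent norm ($\|f\|_1/2\le\|f\|_\tc\le\|f\|_1$), hence isomorphic to $\ell_1$, and by James's nondistortion theorem it contains $(1+\ep)$-isomorphic copies of $\ell_1$ for every $\ep>0$. So no contradiction is available at any fixed $\ep$; the argument must exploit exact isometry on genuine elements of the completion. The paper does this by proving an exact optimality criterion (Lemma \ref{L:Opt}) for \emph{generalized} transportation plans $f=\sum_i a_i(\1_{x_i}-\1_{y_i})$ with $\sum_i a_i<\infty$, valid for arbitrary $f\in\tc(M)$, which yields the closed-form cost \eqref{E:OptCost}; with that in hand one shows disjointness of supports and then that $\|x_1+x_p\|_\tc<\|x_1\|_\tc+\|x_p\|_\tc$ for a suitably chosen far-out $x_p$. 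Your final step (``the limiting data force two disjoint oriented transports to be simultaneously optimal, in violation of the interference built into $M$'') is the entire theorem, and as you yourself note, it is exactly the part that remains to be done.
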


\begin{proof} Consider a metric space whose vertex set is $\mathbb{N}$, while its metric is quite different from the
standard. It is a close-to-equilateral metric defined as follows.
Let $h:\mathbb{N}\to (1,2)$ be a strictly increasing function and
the metric $d$ be given by

\begin{equation}\label{E:Def_d} d(i,j)=\begin{cases} h(\min\{i,~j\}) &\hbox{ if }i\ne j\\
0 &\hbox{ if }i=j.\end{cases}\end{equation} It is clear that $d$
is a metric for any choice of $h$. This metric space is a
generalization of the space suggested in \cite[Remark~10,
Example~2]{CJ17}.
\medskip

In this case, one can find a very handy description of the space
$\tc(M)$ and the norm on it, which turns out to be equivalent to
the $\ell_1$-norm $\|f\|_1$, which is well defined since $f$ is a
real-valued function on $\mathbb{N}$. In fact, let $f\in\tp(M)$,
then the amount of the available product is equal to $\|f\|_1/2$.
Since each unit of product is to be moved to a distance which is
between $1$ and $2$ (see \eqref{E:Def_d}), we get that the cost of
an optimal transportation plan is between $\|f\|_1/2$ and
$\|f\|_1$. Observe also that $\tp(M)$ contains all finitely
supported sequences contained in the kernel of the functional
$(1,\dots,1,\dots)\in\ell_\infty$. Therefore the space $\tc(M)$
consists of all sequences of the intersection
$\ell_1\cap\ker(1,\dots,1,\dots)$, and its norm satisfies
$\|f\|_1/2\le \|f\|_\tc\le \|f\|_1$.

 For the sequel, it will be convenient to introduce
the notion of a {\it (generalized) transportation plan} for
$f\in\tc(M)$ as a representation of $f$ by means of a convergent
series of the form:
\begin{equation}\label{E:InfTranPlan} f=\sum_{i=1}^\infty a_i(\1_{x_i}-\1_{y_i})\end{equation} with $a_i>0$ and
\begin{equation}\label{E:Sum_ai}\sum_{i=1}^\infty
a_i<\infty.\end{equation}

 We would like to emphasize that condition
\eqref{E:Sum_ai} is different from the condition in the standard
description of the completion, which in our case can be described
as the set of sums of all series of the form
\[\sum_{k=1}^\infty\sum_{i=s_k+1}^{s_{k+1}}a_i(\1_{x_i}-\1_{y_i})\]
for some $0=s_1<s_2<\dots<s_k<\dots$, $\{x_i\}$, $\{y_i\}$, and
$\{a_i\}$ with
\begin{equation}\label{E:Complet}\sum_{k=1}^\infty\left\|\sum_{i=s_k+1}^{s_{k+1}}a_i(\1_{x_i}-\1_{y_i})\right\|_\tc<\infty.\end{equation}

The reason for which we use \eqref{E:Sum_ai} instead of the
standard condition \eqref{E:Complet} is: the conditions are
equivalent for the spaces which we consider. In fact, because all
distances in $M$ are between $1$ and $2$, we have
\[\sum_{i=s_k+1}^{s_{k+1}}
a_i\le
\left\|\sum_{i=s_k+1}^{s_{k+1}}a_i(\1_{x_i}-\1_{y_i})\right\|_\tc\le2\sum_{i=s_k+1}^{s_{k+1}}
a_i\] in the case where the transportation plan given by
$\sum_{i=s_k+1}^{s_{k+1}}a_i(\1_{x_i}-\1_{y_i})$ is optimal.

The {\it cost} of the transportation plan \eqref{E:InfTranPlan} is
defined as $\sum_{i=1}^\infty a_id(x_i,y_i)$. Observe that since
$d(x_i,y_i)\le 2$ for all $x_i$ and $y_i$, this cost is always
finite if $\sum_{i=1}^\infty a_i<\infty$. The definition of a
completion also implies that $\|f\|_\tc$ is the infimum of costs
of generalized transportation plans for $f$ for every
$f\in\tc(M)$.

It turns out that  in  this metric space $M$, for each
$f\in\tc(M)$, there exists a minimum-cost generalized
transportation plan, which can be described in the following way.
Denote by $f_i$ the value of $f\in\tc(M)$ at $i$. Let
$m\in\mathbb{N}$ be such that $\sum_{i=1}^m|f_i|\ge \|f\|_1/2$ and
$\sum_{i=1}^{m-1}|f_i|<\|f\|_1/2$. We call $m$ the {\it median} of
the support of $f$. We represent $f$ as a sum of ``beginning'' and
``end'', namely
\[b=\sum_{i=1}^{m-1}f_i\1_i+\sign f_m\left(\frac{\|f\|_1}2-\sum_{i=1}^{m-1}|f_i|\right)\1_m\]
and
\[e=\sum_{i=m+1}^{\infty}f_i\1_i+\sign f_m\left(\frac{\|f\|_1}2-\sum_{i=m+1}^{\infty}|f_i|\right)\1_m.\]

Observe that

\begin{equation}\label{E:Cond_b&e} b+e=f\quad\hbox{and}\quad
\|b\|_1=\|e\|_1=\|f\|_1/2.\end{equation}

\begin{lemma}\label{L:Opt} A generalized transportation plan
$f=\sum_{i=1}^\infty a_i(\1_{x_i}-\1_{y_i})$ with $a_i>0$ is
optimal if the following conditions are satisfied:

\begin{itemize}

\item[{\bf (1)}] All $x_i,y_i$ are in the support of $f$.

\item[{\bf (2)}] The signs of the function
$a_i(\1_{x_i}-\1_{y_i})$ at $x_i$ and $y_i$ are the same as the
signs of $f$ restricted to $x_i$ and $y_i$.

\item[{\bf (3)}] Out of each pair $x_i, y_i$, one point is in the
support of $b$ and the other  is in the support of $e$.
\end{itemize}
\end{lemma}

\begin{proof}
It can be readily seen that \eqref{E:Cond_b&e} implies the
existence of such plans. Also, it is easy to see that the cost of
each of them equals:

\begin{equation}\label{E:OptCost}\sum_{i=1}^{m-1}|f_i|h(i)+\left(\|f\|_1/2-\sum_{i=1}^{m-1}|f_i|\right)h(m).\end{equation}

It remains to show that the cost of a generalized transportation
plan cannot be less than \eqref{E:OptCost}. We prove this in three
steps labelled as {\bf (i)-(iii)} according to items {\bf (1)-(3)}
above, respectively.\medskip

{\bf (i)} We show that if a generalized transportation plan
\begin{equation}\label{E:PlanAlpha} \sum_{i=1}^\infty\alpha_i(\1_{x_i}-\1_{y_i})\end{equation} with $\alpha_i>0$ does not
satisfy {\bf (1)}, it can be modified in such a way that its cost
decreases and ultimately one obtains a plan satisfying {\bf (1)}.

In fact, if $t\notin\supp f$, then the series of coefficients of
$\1_t$ in \eqref{E:PlanAlpha} adds to $0$, so the part of
\eqref{E:PlanAlpha} containing $\1_t$ can be written as
\begin{equation}\label{E:Sum_with_t}\sum_{k=1}^\infty\alpha_{n_k}(\1_t-\1_{y_{n_k}})+\sum_{j=1}^\infty\alpha_{m_j}(\1_{x_{m_j}}-\1_t),
\end{equation}
where $\{n_k\}_{k=1}^\infty$ and $\{m_j\}_{j=1}^\infty$ are two
disjoint - possibly finite - subsets in $\mathbb{N}$ and
$\sum_{k=1}^\infty\alpha_{n_k}=\sum_{j=1}^\infty\alpha_{m_j}=\alpha$.

It can be noticed that the sum \eqref{E:Sum_with_t} admits the
representation:
\begin{equation}\label{E:2ndSum_with_t}\sum_{j,k=1}^\infty\nu_{j,k}(\1_{x_{m_j}}-\1_{y_{n_k}}),
\end{equation}
where $\nu_{j,k}\ge 0$ and $\sum_{j,k=1}^\infty\nu_{j,k}=\alpha$.
From here, one derives that
\[\sum_{k=1}^\infty\alpha_{n_k}d(t,y_{n_k})+\sum_{j=1}^\infty\alpha_{m_j}
d(x_{m_j},t)>\sum_{j,k=1}^\infty\nu_{j,k} d(x_{m_j},y_{n_k}).\]
This is because all distances are in the open interval $(1,2)$.

The procedure may be repeated for all points $t$ violating {\bf
(1)} and in the limit we get a generalized transportation plan
satisfying {\bf (1)} whose cost does not exceed the cost of the
original plan. Therefore, it may be assumed that
\eqref{E:PlanAlpha} satisfies condition {\bf (1)} of Lemma
\ref{L:Opt}.
\medskip

{\bf (ii)} Suppose that the plan \eqref{E:PlanAlpha} does not
satisfy {\bf (2)}. Let $q$ be a point at which the condition is
not satisfied, implying  that $f_q\ne 0$ and that $\1_q$ is
present in  three nonzero sums:
\begin{equation}\label{E:Sum_with_q}\sum_{k=1}^\infty\alpha_{n_k}(\1_q-\1_{y_{n_k}})+\sum_{j=1}^\infty\alpha_{m_j}(\1_{x_{m_j}}-\1_q)+
\sum_{l=1}^\infty\alpha_{s_l}\sign f_q(\1_q-\1_{z_{s_l}}),
\end{equation}
where $\{n_k\}$ and $\{m_j\}$ are disjoint,
$\sum_{k=1}^\infty\alpha_{n_k}=\sum_{j=1}^\infty \alpha_{m_j}$,
$\sum_{l=1}^\infty \alpha_{s_l}=|f_q|$, $z_{s_l}=y_{s_l}$ if
$\sign f_q=1$, while  $z_{s_l}=x_{s_l}$ if $\sign f_q=-1$.

Now we modify the sum
$\sum_{k=1}^\infty\alpha_{n_k}(\1_q-\1_{y_{n_k}})+\sum_{j=1}^\infty\alpha_{m_j}(\1_{x_{m_j}}-\1_q)$
in the transportation plan exactly in the same way as in {\bf
(i)}, and get a cheaper plan, where the condition {\bf (2)} is
satisfied for $q$, and no violators of conditions {\bf (1)} or
{\bf (2)} are added.

Repeating the procedure for all points $q$ violating {\bf (2)},
 in the limit, we reach a generalized transportation plan
satisfying {\bf (2)} and {\bf (1)}, whose cost does not exceed the
cost of the original plan. Hence, one may assume that
\eqref{E:PlanAlpha} satisfies {\bf (1)} and {\bf (2)} of Lemma
\ref{L:Opt}.
\medskip

{\bf (iii)} Assume the contrary to {\bf (3)}. Let $\sum_{i\in
A}\alpha_i(\1_{x_i}-\1_{y_i})$ be the sum of all terms of the
generalized transportation plan in which both $x_i$ and $y_i$ are
$\le m$ and $\sum_{i\in B}\alpha_i(\1_{x_i}-\1_{y_i})$ be the sum
of all terms of the generalized transportation plan in which both
$x_i$ and $y_i$ are $\ge m$.

Let us show that conditions \eqref{E:Cond_b&e} imply that
\begin{equation}\label{E:A=B}\sum_{i\in A}\alpha_i=\sum_{i\in
B}\alpha_i.\end{equation}

In fact, the sum $\sum_{i\in \mathbb{N}\backslash(A\cup
B)}\alpha_i(\1_{x_i}-\1_{y_i})$ contributes equally to the
$\ell_1$-norm of both $b$ and $e$. Therefore, by
\eqref{E:Cond_b&e}, the remaining contributions should be also
equal yielding \eqref{E:A=B} due to the fact that the
transportation plan satisfies {\bf (1)} and {\bf (2)}.

Condition \eqref{E:A=B} implies that we can redesign the
generalized transportation plan \eqref{E:PlanAlpha} in such a way
that the product is moved from $x_i$, $i\in A$, to $y_i$, $i\in
B$, and from $x_i$, $i\in B$ to $y_i$, $i\in A$. As a result we
get a cheaper generalized transportation plan satisfying
conditions {\bf (1)-(3)} of Lemma \ref{L:Opt}.
\end{proof}

Now, suppose that there is a subspace of $\tc(M)$ isometric to
$\ell_1$, and let vectors $\{x_n\}_{n=1}^\infty\subset\tc(M)$ be
isometrically equivalent to the unit vector basis of $\ell_1$.
\medskip

\begin{lemma} The vectors $\{x_n\}$ have disjoint supports.
\end{lemma}

\begin{proof} Suppose that $i$ is in the support of both $x_n$ and
$x_p$. We may assume that the signs of $x_n(i)$ and $x_p(i)$ are
different, changing $x_n$ to $-x_n$, if needed. Assume $x_n(i)<0$
and $x_p(i)>0$

It suffices to show that
$\|x_n+x_p\|_\tc<\|x_n\|_\tc+\|x_p\|_\tc$, as it leads to a
contradiction.

To achieve this, it is enough to establish that the sum of
minimum-cost generalized transportation plans for $x_n$ and $x_p$
is not a minimum-cost transportation plan for $x_n+x_p$ since it
can be improved. This can be seen as follows: since $x_n(i)<0$ and
$x_p(i)>0$, in the sum of minimum-cost generalized transportation
plans for $x_n$ and $x_p$, we deliver $-x_n(i)$ units to $i$ and
move $x_p(i)$ units from $i$. It is clear that we can move
$\min\{-x_n(i), x_p(i)\}$ units directly, and since all triangle
inequalities in $M$ are strict, this will decrease the cost of the
plan.
\end{proof}

 Let $m_1$ be the median of the support of
$x_1=\{x_{1,i}\}_{i=1}^\infty$. For $j\in\mathbb{N}$, denote by
$[1,j]$ the interval $\{1,\dots,j\}$ of integers. Observe that it
is impossible that $\supp x_1\subset [1,m_1]$. Indeed, this would
imply that $|x_{1,m_1}|>\|x_1\|_1/2$, which cannot happen for a
function with zero sum. Hence, there are elements in $\supp x_1$
which are larger than $m_1$. Let $k$ be the least such element.

Since $\{x_n\}$ are disjointly supported, there exists $x_p$ such
that all elements of the support of $x_p$ are larger than $k$. We
assert that in this case
$\|x_1+x_p\|_\tc<\|x_1\|_\tc+\|x_p\|_\tc$, getting a contradiction
with the assumption that $\{x_n\}$ is isometrically equivalent to
the unit vector basis of $\ell_1$. Denote by $m_p$ the median of
the support of $x_p$ and by $m_{+}$ the median of the support of
$x_1+x_p$.

Let us analyze the relations between the optimal transportation
plans for $x_1$, $x_p$, and $x_1+x_p$. Since $x_1$ and $x_p$ are
disjointly supported, in the optimal transportation plan for
$x_1+x_p$ we have to move $\|x_1\|_1/2+\|x_p\|_1/2$ units of
product.

Those $\|x_1\|_1/2$ units of product in $x_1$, which were
located/needed in the lower half of support of $x_1$, both in the
plan for $x_1$ and in the plan for $x_1+x_p$, will be moved
to/from locations corresponding to larger elements of
$\mathbb{N}$, more precisely, to some locations corresponding to
the upper half of support of $x_1$ and some locations
corresponding to the upper half of support of $x_1+x_p$,
respectively. Because the distance $d(i,j)$, $i\ne j$, depends
only on $\min\{i,j\}$, the cost of these relocations in both cases
will be $\|x_1\|_\tc$.
\medskip

After that, in the optimal transportation plan for $x_1+x_p$ we
need to pick the ``next'' $\|x_p\|_1/2$ units of product of
$x_1+x_p$ located between $m_1$ and $m_+$ (possibly inclusive) and
move them from/to for distances $h(i)$ corresponding to their
locations.

Observe that we do almost the same in the optimal transportation
plan for $x_p$, but there, of course, we pick only units
corresponding to $x_p$.

Since both $m_1$ and $k$ are less than any element of $\supp x_p$,
in the first case some of the locations corresponding to these
$\|x_p\|_1/2$ units for the optimal transportation plan for
$x_1+x_p$ will be strictly smaller than the locations for lower
$\|x_p\|_1/2$ units of $x_p$.

Since $h(i)$ is a strictly increasing function, this implies that
the cost of relocation of these $\|x_p\|_1/2$ units in the optimal
plan for $x_1+x_p$ is strictly smaller than $\|x_p\|_\tc$.
\end{proof}

\section{On the kernel of the seminorm
$\|\cdot\|_{\mathcal{DP}}$}\label{S:KerDP}

In \cite{MPV08} the seminorm $\|\cdot\|_{\mathcal{DP}}$ is called
the {\it double-point norm}. However, it appears
 that, for some metric spaces $M$, the
seminorm $\|\cdot\|_{\mathcal{DP}}$ is not a norm.

\begin{observation}\label{O:DPNotNorm} The seminorm $\|\cdot\|_{\mathcal{DP}}$ is not a
norm if and only if there exists a nonzero function $f\in \tp(M)$
such that
\begin{equation}\label{E:DPnotnorm}
\sum_{x\in M}f(x) d(v,x) \;\; does\;\; not\;\; depend\;\; on\;\;
v.
\end{equation}
\end{observation}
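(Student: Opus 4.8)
The plan is to reduce the statement to a direct unwinding of the definition of $\|\cdot\|_{\mdp}$, using the elementary fact that a seminorm fails to be a norm precisely when its kernel contains a nonzero vector. First I would record that, by the definition of $\|\cdot\|_{\mdp}$ as the supremum of the absolute values $|\phi_{u,v}(f)|$ over all $u,v\in M$, one has $\|f\|_{\mdp}=0$ if and only if $\phi_{u,v}(f)=0$ for every pair $u,v\in M$: since each quantity $|\phi_{u,v}(f)|$ is nonnegative, the supremum vanishes exactly when all of them vanish. Consequently $\|\cdot\|_{\mdp}$ is not a norm if and only if there exists a nonzero $f\in\tp(M)$ annihilated by every $\phi_{u,v}$.

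Next I would expand the condition $\phi_{u,v}(f)=0$ using $\phi_{u,v}=\bigl(d(v,\cdot)-d(u,\cdot)\bigr)/2$. Introducing the function $g(w):=\sum_{x\in M}f(x)\,d(w,x)$, which is well defined for every $w\in M$ because $f$ is finitely supported, I compute
\[
\phi_{u,v}(f)=\tfrac12\sum_{x\in M}f(x)\bigl(d(v,x)-d(u,x)\bigr)=\tfrac12\bigl(g(v)-g(u)\bigr).
\]
Thus $\phi_{u,v}(f)=0$ for all $u,v\in M$ is equivalent to $g(u)=g(v)$ for all $u,v\in M$, that is, to $g$ being constant on $M$. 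This last condition is exactly the assertion that $\sum_{x\in M}f(x)\,d(v,x)$ does not depend on $v$, which is \eqref{E:DPnotnorm}.

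Combining the two steps yields the equivalence in both directions: a nonzero $f$ satisfying \eqref{E:DPnotnorm} lies in the kernel of $\|\cdot\|_{\mdp}$, so the seminorm is not a norm; conversely, any nonzero kernel element of $\|\cdot\|_{\mdp}$ produces such an $f$. I do not expect a genuine obstacle here, as the argument is a two-line computation from the definitions; the only point deserving explicit mention is the passage between ``the supremum of the $|\phi_{u,v}(f)|$ equals zero'' and ``each $\phi_{u,v}(f)$ equals zero'', which is immediate from nonnegativity, together with the finiteness of $\supp f$ guaranteeing that $g(w)$ is defined at every point of $M$.
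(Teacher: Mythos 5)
Your argument is correct and is essentially identical to the paper's own proof: both unwind the definition of $\|\cdot\|_{\mdp}$ to see that it equals the supremum over $u,v$ of $\tfrac12\bigl|\sum_{x\in M}f(x)d(v,x)-\sum_{x\in M}f(x)d(u,x)\bigr|$, which vanishes for a nonzero $f$ exactly when $v\mapsto\sum_{x\in M}f(x)d(v,x)$ is constant. No gaps.
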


\begin{proof} In fact, $\|f\|_{\mathcal{DP}}$ is the supremum over $u$
and $v$ of
\[\left|\sum_{x\in M}\frac{d(v,x)-d(u,x)}2\,f(x)\right|=\frac12\left|
\sum_{x\in M}f(x) d(v,x)-\sum_{x\in M}f(x)
d(u,x)\right|.\qedhere\]
\end{proof}

Examples of such metric spaces $M$ are provided below.

\begin{example}\label{Ex:KerDP}
Let $M$ be a $4$-cycle and
\begin{equation}\label{E:FunInKer} f=\1_{x_1}-\1_{x_2}+\1_{x_3}-\1_{x_4},\end{equation} where
$x_1,x_2,x_3,x_4$ are vertices of the cycle in the cyclic order.
Then, condition \eqref{E:DPnotnorm} is satisfied and hence
$\|f\|_{\mathcal{DP}}=0$ and $\|\cdot\|_{\mathcal{DP}}$ is not a
norm.
\end{example}

 \begin{example} The preceding example can be generalized to a sufficient condition for existence of $f\ne 0$ with
$\|f\|_{\mathcal{DP}}=0$.  The condition is the existence in $M$
of a $4$-tuple $x_1,x_2,x_3,x_4$, such that

\[d(x_1,x_2)=d(x_2,x_3)= d(x_3,x_4)=d(x_4,x_1),\]

\[d(x_1,x_3)=d(x_2,x_4)=2d(x_1,x_2),\]
and, in addition, for each $x\in M${,} we have:
\[d(x,x_1)+d(x,x_3)=d(x,x_2)+d(x,x_4).\]

In this case  function \eqref{E:FunInKer} also satisfies
$\|f\|_{\mathcal{DP}}=0$.
\end{example}

\medskip

\begin{example}\label{Ex:DPnonMetr} Another class of metric spaces for which $\|\cdot\|_\mdp$
is not norm can be constructed in the following way. \end{example}

Given $m\in\mathbb{N}$, we construct a metric space $M$ of
cardinality $2m$ as a union of two disjoint sets, $A$ and $B$
satisfying $|A|=|B|=m$. The metric on $M$ is defined as

\[d(x,y)=\begin{cases} 0\quad &\hbox{ if } x=y\\
a\quad &\hbox{ if } x,y\in A,~x\ne y\\
a\quad &\hbox{ if } x,y\in B,~x\ne y\\
c\quad &\hbox{ if } x\in A,~y\in B.\end{cases}\]

For $d(x,y)$ to be a metric it is necessary and sufficient that
\begin{equation}\label{E:Tr} 0<a\le 2c\end{equation}

Let $f=\1_A-\1_B\in\tp(M)$, where for  a subset $U\subseteq M$,
its indicator  is:

\[\1_U(x)=\begin{cases} 1 &\hbox{ if }x\in U\\
0 &\hbox{ if }x\notin U.\end{cases}\]

Then,  with a suitable choice $m,a$, and $c$, for every $v\in M$,
\begin{equation}\label{E:Desired}
\sum_{x\in M} f(x)d(v,x)=0,
\end{equation}
and the conclusion follows from Observation \ref{O:DPNotNorm}. For
$v\in A$ or $v\in B$, equation \eqref{E:Desired} becomes
\begin{equation}\label{E:InA}
(m-1)a-mc=0.
\end{equation}

Meanwhile, equation \eqref{E:InA} can be written as
\begin{equation}\label{E:a}
a=\left(\frac{m}{m-1}\right)c.
\end{equation}
Consequently, if $c>0$ is arbitrary, $m$ is any integer satisfying
$m\ge 2$,  and $a$ is given by \eqref{E:a}, condition \eqref{E:Tr}
is satisfied. Therefore, with such selection of the parameters,
\eqref{E:Desired} holds.
%\end{proof}

\section{The spaces $\tp_\mdp(T)$ when $T$ is a finite
tree}\label{S:DPforTree}

It is well known that the space $\tp_\ml(T)$ for a finite tree $T$
is isometric to $\ell_1^d$ of the corresponding dimension, see
\cite{God10} and \cite[Proposition 2.1]{DKO18+}. In this section
we show that $\tp_\mdp(T)$ is quite different, and that it changes
in the ``undesirable'' direction, as we are mostly interested, see
Section \ref{S:Directions}, in moving ``away from
$\ell_\infty^n$''.

\begin{proposition}
\label{P:TreeDP} If $T$ is a finite tree, possibly weighted, then
\begin{equation}\label{E:DPonTrees}d_{BM}(\tp_{\mdp}(T),\ell_\infty^{|E(T)|})\le
4.\end{equation}
\end{proposition}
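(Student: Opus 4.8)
The plan is to reduce the double-point norm on a tree to the oscillation of a scalar ``potential'' attached to each $f$, and then to pass from edge-flow coordinates to vertex-potential coordinates, which is precisely the step that makes the space cube-like.

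First I would record the basic formula. Following the computation in the proof of Observation~\ref{O:DPNotNorm}, for $f\in\tp(T)$ set $g_f(v)=\sum_{x}f(x)\,d(v,x)$. Then
\[
\|f\|_\mdp=\tfrac12\sup_{u,v}\bigl|g_f(v)-g_f(u)\bigr|=\tfrac12\Bigl(\max_{v}g_f(v)-\min_{v}g_f(v)\Bigr),
\]
so $\|f\|_\mdp$ is exactly half the oscillation of $g_f$ over the vertices of $T$. Next I would compute how $g_f$ changes along an edge: for an edge $e=\{p,q\}$ of weight $w_e$, deleting $e$ splits $T$ into the parts $V_p\ni p$ and $V_q\ni q$; writing $F_e(f)=\sum_{x\in V_p}f(x)$ and using $\sum_x f(x)=0$ gives
\[
g_f(q)-g_f(p)=w_e\Bigl(\sum_{x\in V_p}f(x)-\sum_{x\in V_q}f(x)\Bigr)=2w_eF_e(f).
\]
If $g_f$ is constant then every $F_e(f)$ vanishes, and peeling leaves one at a time forces $f=0$; hence $\|\cdot\|_\mdp$ is a genuine norm on $\tp(T)$ and $\tp_\mdp(T)$ is $|E(T)|$-dimensional.

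The key move is the change of coordinates. Fix a base point $O$ and define the linear map $\Phi\colon\tp(T)\to\mathbb{R}^{|E(T)|}$ by $\Phi(f)=\bigl(g_f(v)-g_f(O)\bigr)_{v\neq O}$, indexed by the non-root vertices (of which there are exactly $|E(T)|=\dim\tp(T)$). Injectivity is the kernel computation above, so $\Phi$ is a linear bijection. Writing $G_v=g_f(v)-g_f(O)$ with $G_O=0$, the oscillation formula becomes
\[
2\|f\|_\mdp=\max_v G_v-\min_v G_v=\max\bigl(0,\max_{v\neq O}G_v\bigr)+\max\bigl(0,-\min_{v\neq O}G_v\bigr).
\]
Since $\max\bigl(0,\max_{v\neq O}G_v,\max_{v\neq O}(-G_v)\bigr)=\max_{v\neq O}|G_v|=\|\Phi f\|_\infty$, the two nonnegative summands on the right are each at most $\|\Phi f\|_\infty$ while their maximum equals $\|\Phi f\|_\infty$. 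This yields $\|\Phi f\|_\infty\le 2\|f\|_\mdp\le 2\|\Phi f\|_\infty$, so $\Phi$ witnesses $d_{BM}(\tp_\mdp(T),\ell_\infty^{|E(T)|})\le 2$, comfortably inside the bound \eqref{E:DPonTrees}.

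The main obstacle is choosing the right coordinates rather than any estimate itself. In the edge-flow coordinates $c_e=w_eF_e(f)$ the increment formula gives $\|f\|_\mdp=\max_{\text{paths }P}\bigl|\sum_{e\in P}\pm c_e\bigr|$, and a long path graph (all $c_e$ equal and coherently signed) shows this norm is comparable to $|E(T)|\cdot\|c\|_\infty$, so in those coordinates the Banach--Mazur distance to $\ell_\infty^{|E(T)|}$ is unbounded. It is exactly the passage to the vertex potentials $g_f$ that converts coherent path-sums into differences $g_f(v)-g_f(u)$, linearizing the geometry into a cube. The only other points requiring care are verifying that $\Phi$ is a bijection (equivalently, that $\|\cdot\|_\mdp$ is a norm on trees, handled by the leaf-peeling argument) and checking that the weighted case introduces nothing new beyond the factors $w_e$, which it does not.
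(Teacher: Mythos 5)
Your proof is correct, and it in fact establishes the stronger bound $d_{BM}(\tp_{\mdp}(T),\ell_\infty^{|E(T)|})\le 2$. The linear map you construct is, up to an overall factor of $2$, the same one the paper uses: writing $f=\sum_{e}a_ef_e$ with $f_e=\1_w-\1_z$ as in the paper's proof, your increment formula gives $g_f(z)-g_f(w)=2d_ea_e$ across each edge, so your coordinate $G_v=g_f(v)-g_f(O)$ is exactly twice the paper's cumulative sum $s_g$ taken along the root-to-$v$ path (under the natural bijection between non-root vertices and edges). The real difference is in how the norm comparison is obtained. The paper expresses $\|f\|_{\mdp}$ as a maximum of signed sums over arbitrary paths and then restricts the class of paths twice --- first to descending paths, then to paths anchored at the root --- paying a factor of $2$ each time, hence the constant $4$. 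You instead note that $2\|f\|_{\mdp}$ is precisely the oscillation of the potential $g_f$ over the vertices, and since that potential is normalized to vanish at the root, the oscillation is trapped between $\max_v|G_v|$ and $2\max_v|G_v|$; this one observation absorbs both factor-of-$2$ losses into a single factor. The supporting points are all sound: the injectivity of $\Phi$ via leaf-peeling (which also shows $\|\cdot\|_{\mdp}$ is a genuine norm on trees, something the paper only gets implicitly from the bijectivity of its map $D$), the dimension count $|V|-1=|E(T)|$, and the instructive remark that the raw edge-flow coordinates $w_eF_e(f)$ cannot work because coherent path sums on a long path make that comparison degenerate with $|E(T)|$.
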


\begin{proof} Assume that $T$ is a rooted tree and, for each its edge $e$,
consider the function $f_e:=\1_w-\1_z\in\tp(T)$, where $w$ and $z$
are the ends of $e$, and $w$ is the one closer to the root. Then,
every $f\in\tp(M)$ can be written in the form:
\begin{equation}\label{E:Repf} f=\sum_{e\in E(T)} a_ef_e.\end{equation} The norm on $\tp_\mdp(T)$ can be
calculated in the following way: Consider a path $P$ in $T$ with
ends $u$ and $v$. Suppose $P$ is directed in such a way that first
$P$ goes towards the root, denote this first part by $P_1$, and
then goes away from the root, denote the second part by $P_2$.
Set:
\[P(f)=\sum_{e\in P_1}d_ea_e-\sum_{e\in P_2}d_ea_e,\]
where $d_e$ is the weight (length) of the edge $e$. Then
\begin{equation}\label{E:DPinPaths}\|f\|_\mdp=\max_{P}|P(f)|.\end{equation}
To justify \eqref{E:DPinPaths}, observe that

\[\|f\|_\mdp=\max_{u,v\in M}
\left|\sum_{x\in M}\frac{d(v,x)-d(u,x)}2\,f(x)\right|.\] Let
\[g(x)=\frac{d(v,x)-d(u,x)}2,\]
and let $w$ and $z$ be the ends of an edge $e$. Then,
\[g(w)-g(z)=\begin{cases} 0 &\hbox{ if } e \hbox{ is not in }P,\\
d_e &\hbox{ if } e \hbox{ is in }P \hbox{ and } w \hbox{ is closer
to }u.\end{cases}\] Combining this formula with \eqref{E:Repf},
one derives \eqref{E:DPinPaths}.
\medskip

By \eqref{E:DPinPaths}, the norm $\|f\|_\mdp$, up to a factor of
$2$, is equivalent to
\[\max\{|P(f)|: P\hbox{ is a descending path in }T\},\]
and up to a factor of $4$, the norm  $\|f\|_\mdp$ is equivalent to
the next one:
\[\max\{|P(f)|: P\hbox{ is a path in }T\hbox{ with root being one of its ends}\}.\]

Let us assign to each edge $g$ the real number $s_g$ defined as
the sum of numbers $d_ea_e$ over all edges connecting $g$ with the
root, including $g$. Clearly, this defines a bijective linear map
$D$ from $\tp(T)$ to the space of real-valued functions on the
edge set $E(T)$. The discussion above implies that
\[\frac14\|f\|_\mdp\le \|Df\|_{\ell_\infty^{|E(T)|}}\le
\|f\|_\mdp,\] and in this way proves \eqref{E:DPonTrees}.
\end{proof}

\section{Locally finite representing subsets in Banach spaces}\label{S:Repr}

To begin with, let us recollect the following definition given in
\cite{DL08}.

\begin{definition}\label{D:LocFinRep} A subset $K$ of a
separable Banach space $X$ is said to be a {\it representing
subset} of $X$ if every Banach space containing an isometric copy
of $K$ contains an isometric copy of $X$.
\end{definition}

In this connection, the following problem arises.

\begin{problem}\label{P:LocFinRepr} Characterize Banach spaces for which there exist locally finite representing
sets.
\end{problem}

This problem is motivated by the applications which we mention at
the end of this section. Below, we solve this problem for $\ell_1$
by proving the following analogue of \cite[Proposition 4.3]{DL08}.

\begin{proposition}\label{P:LocFinRep} There exist locally finite metric spaces
representing $\ell_1$.
\end{proposition}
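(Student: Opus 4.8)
The plan is to construct an explicit locally finite metric space $K$ that represents $\ell_1$, following the strategy of \cite[Proposition 4.3]{DL08} but adapting it to the $\ell_1$ setting. The fundamental idea is that $\ell_1$ is a separable Banach space, so its unit ball (or a suitable dense countable subset thereof) can serve as a ``template,'' but a copy of the entire ball is not locally finite. The key insight must therefore be that a carefully chosen \emph{net} suffices: I would take $K$ to be a countable subset of $\ell_1$ that is dense enough to force any Banach space containing $K$ isometrically to contain a copy of all of $\ell_1$, yet sparse enough at every scale to be locally finite.

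First I would set up the candidate. A natural choice is to let $K$ consist of all finitely supported vectors in $\ell_1$ whose coordinates are rational (or dyadic) with denominators controlled so that, within any ball of radius $R$ about the origin, only finitely many such points appear. The local finiteness comes from bounding both the number of nonzero coordinates and the granularity of the coordinate values as a function of the norm: inside $B(0,R)$ one can only use the first $N(R)$ coordinates to within a mesh $\delta(R)$, giving finitely many lattice points. Establishing local finiteness of balls of \emph{arbitrary} center then follows from translation within the net or from the triangle inequality, since a ball of radius $r$ about any $x\in K$ sits inside $B(0, \|x\|+r)$.

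The main work, and the step I expect to be the principal obstacle, is the representing property: showing that any Banach space $Y$ containing an isometric copy of $K$ must contain a linear isometric copy of $\ell_1$. I would argue that an isometric embedding $\varphi\colon K\to Y$ can be used to reconstruct the linear structure of $\ell_1$ on the closed linear span of $\varphi(K)$. The difficulty is that an isometric embedding of a metric net need not a priori be affine, so one cannot simply extend it linearly. The standard remedy is a Gorelik-type or ``rigidity of nets'' argument: because $K$ approximates midpoints, sums, and scalar multiples of $\ell_1$-vectors arbitrarily well (with the approximation quality improving as one moves to finer parts of the net), the embedding $\varphi$ must respect these operations in the limit, forcing affineness and hence linearity of the induced map on the span. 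Here the strict convexity failure and the sharp additivity of the $\ell_1$ norm on disjointly (and even aligned) supported vectors is exploited: in $\ell_1$, $\|x+y\|=\|x\|+\|y\|$ holds precisely when $x,y$ point in a ``compatible'' direction, and this metric data is preserved by any isometry.

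Concretely, I would proceed by fixing a finitely supported rational vector $z\in\ell_1$ and approximating it by integer combinations of net points, then using the preserved distances to show $\varphi$ extends to a well-defined linear isometry $\Phi$ on $\operatorname{span}_{\mathbb{Q}}K$, which is dense in $\ell_1$; continuity then yields the full isometric embedding $\Phi\colon\ell_1\to Y$. The final verification is that $\Phi$ is indeed norm-preserving on all of $\ell_1$, which reduces to checking it on finitely supported rational vectors by density and to invoking the preserved metric relations among net points. The delicate point throughout is quantifying the approximation so that the limiting operations are genuinely linear rather than merely coarsely additive; this is where I would invest the most care, and where the specific choice of mesh $\delta(R)$ in the construction must be tuned to make the rigidity argument go through.
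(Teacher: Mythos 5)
There is a genuine gap at the heart of your argument: the representing property. Your plan is to take a locally finite net $K$ of $\ell_1$ and show that any isometric embedding $\varphi\colon K\to Y$ ``extends'' to a linear isometry on $\operatorname{span}_{\mathbb{Q}}K$ by a rigidity-of-nets argument, using the fact that $K$ ``approximates midpoints, sums, and scalar multiples arbitrarily well.'' But this premise is incompatible with local finiteness: a locally finite subset of an infinite-dimensional space cannot be $\delta$-dense in any ball for any $\delta$ (within $B(0,R)$ your net only touches finitely many coordinate directions), so there are no ``finer parts of the net'' in bounded regions and the approximation quality cannot improve in the limit. Moreover, an isometric embedding of a discrete metric space carries no a priori compatibility with sums or rational combinations --- the metric only records distances $\|x-y\|$, not the points $x+y$ --- so ``reconstructing the linear structure'' needs an explicit mechanism, and Gorelik-type arguments (which concern Lipschitz or uniform equivalences of whole spaces and yield isomorphic, not isometric, conclusions) do not supply one. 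As stated, the central step of your proof is missing.

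The paper avoids all of this by not attempting to extend $T$ at all. It takes $M=\{\sum_{i\in A}2^ie_i: A\subset\mathbb{N} \hbox{ finite}\}$ (locally finite because of the geometric growth of coordinates), sets $f_i=2^{-i}T(2^ie_i)$, and shows these vectors of $Y$ are isometrically equivalent to the unit vector basis of $\ell_1$ --- a linear copy of $\ell_1$ living in $Y$ that is not the image of $T$. The mechanism is duality plus the equality case of the triangle inequality: for each sign pattern one picks a norm-one functional $F$ norming $T(x_+)-T(x_-)$ where $x_\pm=\sum_{i\in A_\pm}2^ie_i$; the preserved norms force $F(T(x_\pm))=\pm\sum_{i\in A_\pm}2^i$, and the preserved geodesic relation $\|2^je_j\|_1+\|x_+-2^je_j\|_1=\|x_+\|_1$ then pins down $F(f_j)=\pm1$. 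This use of supporting functionals and metric midpoint/geodesic data is the idea your proposal needs but does not contain; your closing remark about additivity of the $\ell_1$-norm gestures in this direction, but you would have to replace the linear-extension scheme entirely to make it work.
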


\begin{proof} Let $\{e_i\}_{i=1}^\infty$ be the unit vector basis
of $\ell_1$. Let $M$ be the subset of $\ell_1$ consisting of all
vectors of the form $\sum_{i\in A}2^ie_i$, where $A$ is a finite
subset of $\mathbb{N}$; we assume that $\sum_{i\in
\emptyset}2^ie_i=0$. We endow this subset with the $\ell_1$-metric
and consider it as a metric space. Obviously, this is a locally
finite metric space.

Suppose that $T$ is an isometric embedding of the metric space $M$
into a Banach space $X$. Without loss of generality assume that
$T(0)=0$. Let $f_i=2^{-i}T(2^ie_i)$. It is easy to see that these
vectors should have norm $1$. Our goal is to show that they are
isometrically equivalent to the unit vector basis of $\ell_1$. To
achieve this goal it suffices to prove that, for each finite
collection $\Theta=\{\theta_i\}_{i=1}^n$ with $\theta_i=\pm 1$,
there exists a normalized linear functional $F_\Theta\in X^*$ such
that $F_\Theta(f_i)=\theta_i$ for $i=1,\dots,n$. Let
$A_+=\{i\in\{1,\dots, n\}: \theta_i=1\}$ and $A_-=\{i\in\{1,\dots,
n\}: \theta_i=-1\}$. Then $x_+=\sum_{i\in A_+}2^ie_i$ and
$x_-=\sum_{i\in A_-}2^ie_i$ are in $M$, whence we have:
\begin{equation}\label{E:NormVec}\|x_-\|_1=\|T(x_-)\|_X=\sum_{i\in A_-}2^i,\quad
\|x_+\|_1=\|T(x_+)\|=\sum_{i\in A_+}2^i,\end{equation} and
\[\|x_+-x_-\|_1=\|T(x_+)-T(x_-)\|_X=\sum_{i=1}^n2^i.\] Thus, there exists $F\in X^*$,
$\|F\|=1$, such that $F(T(x_+))-F(T(x_-))=\sum_{i=1}^n2^i$. By
\eqref{E:NormVec}, this implies $F(T(x_+))=\sum_{i\in A_+}2^i$ and
$F(T(x_-))=-\sum_{i\in A_-}2^i$.
\medskip

Next, let us verify that

\[F(f_j)=\begin{cases} 1 & \hbox{~ if~} j\in A_+\\
-1 & \hbox{~ if~} j\in A_-,\end{cases}\] and, therefore, $F$ is
the desired functional $F_\Theta$. \medskip

Consider $j\in A_+$ (the case where $j\in A_-$ is similar).
Observe that $2^je_j$ is on a geodesic joining $0$ and $x_+$ in
the sense that \[\|2^je_j\|_1+\|x_+-2^je_j\|_1=\|x_+\|_1.\] Since
$T$ is an isometry and $T(0)=0$, we get that
\[\|T(2^je_j)\|_X+\|T(x_+)-T(2^je_j)\|_X=\|T(x_+)\|_X.\]
Thus,  $F(T(2^je_j))=2^j$ and $F(f_j)=1$.
\end{proof}

\begin{corollary}\label{C:L1Rep} If a Banach space $X$ contains
$\ell_1^n$ isometrically for each $n\in\mathbb{N}$, but does not
contain $\ell_1$ isometrically, then there exists a locally finite
metric space $M$ such that $X$ contains isometrically each finite
subset of $M$, but does not contain an isometric copy of $M$.
\end{corollary}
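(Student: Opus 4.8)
The plan is to take for $M$ the very space constructed in the proof of Proposition \ref{P:LocFinRep}, namely the locally finite subset of $\ell_1$ consisting of all vectors $\sum_{i\in A}2^ie_i$ with $A\subset\mathbb{N}$ finite, endowed with the $\ell_1$-metric, and to verify the two required properties directly from the hypotheses on $X$. The entire argument will be a short combination of the representing property of $M$ with the assumed finite-dimensional isometric containment.

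First I would establish the negative property, that $X$ does not contain an isometric copy of $M$. This is immediate from the representing property: by Proposition \ref{P:LocFinRep} together with Definition \ref{D:LocFinRep}, every Banach space containing an isometric copy of $M$ contains an isometric copy of $\ell_1$. Hence, were $X$ to contain an isometric copy of $M$, it would contain an isometric copy of $\ell_1$, contradicting the assumption that it does not.

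Next I would establish the positive property, that $X$ contains isometrically each finite subset of $M$. The key observation is that every element of $M$ is finitely supported, so any finite subset $F\subset M$ involves only finitely many coordinates altogether: there is $N\in\mathbb{N}$ with $F\subset\mathrm{span}\{e_1,\dots,e_N\}$. This finite-dimensional subspace of $\ell_1$ is isometric to $\ell_1^N$, and by hypothesis $X$ contains an isometric copy of $\ell_1^N$. Composing the inclusion $F\hookrightarrow\ell_1^N$ with such an isometric embedding of $\ell_1^N$ into $X$ yields an isometric embedding of $F$ into $X$, as required.

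I do not expect a genuine obstacle here, as the conclusion follows by assembling Proposition \ref{P:LocFinRep} with the two hypotheses on $X$. The one point meriting a moment's care is the positive property: one must notice that finite subsets of the locally finite space $M$ always lie inside some finite-dimensional $\ell_1^N$, since this is precisely what converts the finite-dimensional isometric containment $\ell_1^n\hookrightarrow X$ into isometric containment of arbitrary finite subsets of $M$.
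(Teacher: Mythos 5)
Your proposal is correct and follows exactly the paper's own argument: take the locally finite set $M\subset\ell_1$ from Proposition \ref{P:LocFinRep}, note that each finite subset of $M$ sits in some $\ell_1^N$ and hence embeds isometrically into $X$, and conclude from the representing property that $X$ cannot contain $M$ isometrically without containing $\ell_1$. The only difference is that you spell out the finite-support observation slightly more explicitly than the paper does.
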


\begin{proof} Let $M$ be the locally finite subset of $\ell_1$ constructed in the proof of Proposition
\ref{P:LocFinRep}. It is clear that each finite subset of $M$ is
isometric to a subset of $\ell_1^n$ for sufficiently large $n$.
Thus, the Banach space $X$ contains isometrically every finite
subset of $M$. On the other hand, by Proposition
\ref{P:LocFinRep}, $X$ does not contain an isometric copy of $M$.
\end{proof}

{\bf Examples} of spaces satisfying the conditions of Corollary
\ref{C:L1Rep}: $c_0$, $c(\alpha)$, where $\alpha$ is a countable
ordinal, direct sums $(\oplus_{n=1}^\infty\ell_1^n)_p,
(\oplus_{n=1}^\infty\ell_\infty^n)_p$ for $1<p<\infty$. The
necessary definitions can be found in \cite{LT73}.

The spaces $c(\alpha)$, where $\alpha$ is a countable ordinal, and
$(\oplus_{n=1}^\infty\ell_1^n)_p$ for $1<p<\infty$, are new
examples of Banach spaces, for which there exists a locally finite
metric space $M$ such that $X$ contains isometrically each finite
subset of $M$, but does not contain $M$ isometrically. Previously
known examples are available in \cite[Theorem 2.9]{KL08},
\cite{OO19}, \cite{OO19a}.

\section*{Acknowledgement}

The second-named author gratefully acknowledges the support by
National Science Foundation grant NSF DMS-1700176. We would like
to thank the referee for the valuable suggestions and corrections.

%\end{large}

\begin{small}

\renewcommand{\refname}{\section*{References}}

\end{small}

\textsc{Department of Mathematics, Atilim University, 06830
Incek,\\ Ankara, TURKEY} \par \textit{E-mail address}:
\texttt{sofia.ostrovska@atilim.edu.tr}\par\medskip

\textsc{Department of Mathematics and Computer Science, St. John's
University, 8000 Utopia Parkway, Queens, NY 11439, USA} \par
  \textit{E-mail address}: \texttt{ostrovsm@stjohns.edu} \par

\end{document}